\newcommand{\ff}{\text{if and only if}}
\newcommand{\wrt}{with respect to}
\newcommand{\fg}{finitely generated}
\newcommand{\eE}{\EuScript{E}}
\newcommand{\eC}{\EuScript{C}}
\newcommand{\n}{\mathfrak{n} }
\newcommand{\m}{\mathfrak{m} }
\newcommand{\fA}{\mathfrak{a}}
\newcommand{\Z}{\mathbb{Z} }
\newcommand{\pr}{\prime }
\newcommand{\fP}{\mathfrak{P}}
\newcommand{\fp}{\mathfrak{p}}
\newcommand{\fL}{\mathfrak{L} }
\newcommand{\aF}{\mathfrak{a} }
\newcommand{\R}{\mathcal{R}(I)}
\newcommand{\Sc}{\mathcal{S} }
\newcommand{\C}{\mathcal{C} }
\newcommand{\E}{\mathcal{E} }
\newcommand{\cV}{\mathcal{V}}
\newcommand{\bx}{ \mathbf{x}}
\newcommand{\bF}{ \mathbf{f}}
 \newcommand{\rt}{\rightarrow}
\newcommand{\xar}{\longrightarrow}
\newcommand{\wh}{\widehat }
\newcommand{\projdim}{\operatorname{projdim}}
\newcommand{\image}{\operatorname{image}}
\newcommand{\cx}{\operatorname{cx}}
\newcommand{\Spec}{\operatorname{Spec}}
\newcommand{\mSpec}{\operatorname{m-Spec}}
\newcommand{\grade}{\operatorname{grade}}
\newcommand{\Supp}{\operatorname{Supp}}
\newcommand{\ann}{\operatorname{ann}}
\newcommand{\spread}{\operatorname{spread}}
\newcommand{\ldim}{\operatorname{limDim}}
\newcommand{\Ass}{\operatorname{Ass}}
\newcommand{\Tor}{\operatorname{Tor}}
\newcommand{\Hom}{\operatorname{Hom}}
\newcommand{\Ext}{\operatorname{Ext}}
\theoremstyle{plain}
\newtheorem{thm}{Theorem}
\newtheorem{theorem}{Theorem}[section]
\newtheorem{lemma}[theorem]{Lemma}
\newtheorem{proposition}[theorem]{Proposition}
\newtheorem{question}[theorem]{Question}
\newtheorem{fact}[theorem]{Fact}
\theoremstyle{definition}
\newtheorem{definition}[theorem]{Definition}
\newtheorem{remark}[theorem]{Remark}
\newtheorem{example}[theorem]{Example}
\theoremstyle{remark}
\begin{document}
\title{On the finite generation of a family of Ext modules}
\author{Tony ~J.~Puthenpurakal}
\date{\today}
\address{Department of Mathematics, Indian Institute of Technology Bombay, Powai, Mumbai 400 076}
\email{tputhen@math.iitb.ac.in}
\subjclass{Primary 13H10, 13D07; Secondary 13 A02, 13A15}
\keywords{local complete intersection, asymptotic associate primes, cohomological operators}
\thanks{The work for this paper was done while the second
author was visiting University of Kentucky by a fellowship from Department of Science and Technology, India. The author is deeply grateful to DST for its financial support and University of Kentucky for its hospitality.}
 \begin{abstract}
Let $Q$ be a Noetherian ring with finite Krull dimension and let $\mathbf{f}= f_1,\ldots f_c$ be a regular sequence in $Q$.  Set $A = Q/(\mathbf{f})$. Let $I$ be an ideal in $A$, and  let $M$ be a  finitely generated $A$-module with $\projdim_Q M$ finite. Set $\R = \bigoplus_{n\geq 0}I^n$, the Rees-Algebra of $I$. Let $N = \bigoplus_{j \geq 0}N_j$ be  a finitely generated graded $\R$-module.
 We show that
\[
 \bigoplus_{j\geq 0}\bigoplus_{i\geq 0} \ \Ext^{i}_{A}(M,N_j)
\]
is a  finitely generated bi-graded module over $\Sc = \R[t_1,\ldots,t_c]$. We give two applications of this result to local complete intersection rings.
\end{abstract}
\dedicatory{Dedicated to Prof. L. L. Avramov on the occasion of his sixtieth birthday}
 \maketitle
 \section{introduction}
Let $A$ be a Noetherian ring. Let $I$ be an ideal in $A$  and let  $M$ be a finitely  generated $A$-module.
M. Brodmann \cite{Brod} proved that the set
  $\Ass_A  M/I^{n}M$ is independent of $n$ for all  large $n$.
This result is usually deduced by proving that \\
 $\Ass_A  I^n M /I^{n+1}M$  is independent of $n$ for all large $n$.

 \emph{Some Generalizations of Brodmann's result} \\
Fix $i \geq 0$. The following sets are independent of $n$ for all large $n$.
\begin{enumerate}[\rm 1]
  \item
(L. Melkerson and P. Schenzel) \cite[Theorem 1]{Melk-Sch}
\begin{enumerate}[\rm (a)]
  \item $\Ass_A  \Tor^{A}_{i}(M, I^n/I^{n+1})$.
  \item $\Ass_A  \Tor^{A}_{i}(M, A/I^{n})$.
\end{enumerate}
  \item  (same argument as in 1(a)).\\
    $\Ass_A  \Ext_{A}^{i}(M, I^n/I^{n+1})$.
  \item (D. Katz \& E. West; \cite[3.5]{Katz-West}) $\Ass_A  \Ext_{A}^{i}(M,  A/I^{n}A)$.
\end{enumerate}
An example of A. Singh \cite{Singh} shows that
 $$\Ass_A  \lim_{\rightarrow}\Ext_{A}^{i}(  A/I^{n}, M) \quad \text{ need not be
finite.} $$
 So in this example $$\bigcup_{n\geq 1} \Ass_A \Ext_{A}^{i}(  A/I^{n}, M) \quad \text{is not even finite}. $$

We state some questions in this area which motivated me.
  \begin{enumerate}
    \item (W. Vasconcelos: \cite[3.5]{Vascon98})
    Is the set
    \[
    \bigcup_{i\geq 0} \Ass_A \Ext^{i}_{A}(M, A)    \quad \text{finite ?}
    \]
    \item (L. Melkerson and P. Schenzel: \cite[page 936]{Melk-Sch})  Is the set
    \[
    \bigcup_{i\geq 0} \bigcup_{n\geq 0} \Ass_A \Tor^{A}_{i}(M, A/I^{n}) \quad \text{finite ?}
    \]
  \end{enumerate}

The motivation for the main result of this paper came from a Vasconcelos's question.
The author now does not believe that Vasconcelos's question has a positive answer in this generality. However he is unable to give a counter-example.
Note that  if $A$ is a Gorenstein local ring then  Vasconcelos's question has, trivially,  a positive answer. If we change the question a little then we may ask:
if $M$, $D$ are two finitely generated $A$-modules then is  the set
 \[
    \bigcup_{i\geq 0} \Ass_A \Ext^{i}_{A}(M, D)    \quad \text{finite ?}
    \]
 This  is not known for Gorenstein rings in general.
 Using Melkerson and Schenzel's question as a guidepost the questions I was interested to solve was

Let $(A,\m)$ be a local complete intersection of codimension $c$. Are the sets
\begin{enumerate}[\rm (a)]
\item
$\displaystyle{    \bigcup_{i\geq 0} \bigcup_{j\geq 0} \Ass_A \Ext_{A}^{i}(M, D/I^{j}D) \quad \text{finite ?}}$
\item
$\displaystyle{    \bigcup_{i\geq 0} \bigcup_{j\geq 0} \Ass_A \Ext_{A}^{i}(M, I^jD) \quad \text{finite ?}}$
\end{enumerate}
 In Theorem \ref{motiv-ci}
we prove that (b) holds. I have been unable to verify whether (a) holds.

The main  result in this paper is
the following  regarding finite generation of a family of Ext modules. Let $\R = \bigoplus_{n\geq 0}I^nt^n$ be the Rees algebra of $I$.
\begin{thm}\label{main}
Let $Q$ be a Noetherian ring with finite Krull dimension and let $\mathbf{f}= f_1,\ldots f_c$ be a regular sequence in $Q$.  Set $A = Q/(\mathbf{f})$.  Let $M$ be a finitely generated $A$-module with $\projdim_Q M$ finite.
Let $I$ an ideal in $A$ and let $N = \bigoplus_{n\geq 0}N_n$ be a finitely generated $\R$-module.
Then
$$\E(N)= \bigoplus_{i \geq 0}\bigoplus_{n \geq 0}\Ext^{i}_{A}(M,N_n)$$
 is a
\emph{finitely generated} bi-graded
$\Sc = \R[t_1,\ldots, t_c]$-module.
\end{thm}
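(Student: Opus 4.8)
The plan is to absorb the Rees algebra $\R$ into the ground ring and then invoke Gulliksen's finite generation theorem for the ring of cohomology operators over a complete intersection.

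\emph{Step 1.} First I would rewrite $\E(N)$ as a single family of $\Ext$ modules over a polynomial extension of $A$. Choose generators $a_{1},\ldots,a_{r}$ of $I$; then $\R$ is generated in degree one over $A$ by $a_{1}t,\ldots,a_{r}t$, so $\R \cong \mathcal{P}/J$, where $\mathcal{P} = A[y_{1},\ldots,y_{r}]$ is the polynomial ring graded by $\deg y_{i} = 1$ and $J$ is the kernel of the surjection $y_{i} \mapsto a_{i}t$. Thus $N$ is a finitely generated graded $\mathcal{P}$-module with $JN = 0$. Let $F_{\bullet} \rt M$ be a resolution over $A$ by finite rank free modules. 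Since $\mathcal{P}$ is free, hence flat, over $A$, the complex $F_{\bullet}\otimes_{A}\mathcal{P}$ (with $F_{i}$ in internal degree $0$) is a graded free resolution over $\mathcal{P}$ of $\wt{M} := M\otimes_{A}\mathcal{P}$. The natural isomorphism $\Hom_{\mathcal{P}}(F_{i}\otimes_{A}\mathcal{P}, N)\cong \Hom_{A}(F_{i}, N) = \bigoplus_{n\ge 0}\Hom_{A}(F_{i}, N_{n})$ is compatible with the differentials, and forming the graded component of a fixed internal degree is exact; hence for every $i$
\[
\Ext^{i}_{\mathcal{P}}(\wt{M}, N)\;\cong\;\Ext^{i}_{A}(M, N)\;\cong\;\bigoplus_{n\ge 0}\Ext^{i}_{A}(M, N_{n})
\]
as graded $\mathcal{P}$-modules, the internal degree matching the $\R$-degree $n$. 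Summing over $i$ gives $\bigoplus_{i\ge 0}\Ext^{i}_{\mathcal{P}}(\wt{M}, N) = \E(N)$.

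\emph{Step 2.} Next I would introduce the cohomology operators and apply Gulliksen's theorem. Write $\mathcal{P} = \wt{Q}/(\bF)$ with $\wt{Q} = Q[y_{1},\ldots,y_{r}]$, which is again Noetherian of finite Krull dimension, and $\bF$ remains a regular sequence in $\wt{Q}$; moreover $\wt{M} = M\otimes_{Q}\wt{Q}$. A finite $Q$-free resolution of $M$ base-changes to a finite $\wt{Q}$-free resolution of $\wt{M}$, so $\projdim_{\wt{Q}}\wt{M} = \projdim_{Q}M < \infty$. The Eisenbud cohomology operators $t_{1},\ldots,t_{c}$ on $\Ext^{*}_{\mathcal{P}}(\wt{M}, N)$, of cohomological degree $2$ and internal degree $0$, can be built from the resolution $F_{\bullet}\otimes_{A}\mathcal{P}$ together with its lift $\wt{F}_{\bullet}\otimes_{Q}\wt{Q}$, where $\wt{F}_{\bullet}$ is a lift of $F_{\bullet}$ to $Q$; they are therefore the base changes of the operators built from $F_{\bullet}$ and $\wt{F}_{\bullet}$, and so, under the identifications of Step 1, they act on $\bigoplus_{n}\Ext^{i}_{A}(M, N_{n})$ exactly as expected, raising $i$ by $2$ and preserving $n$. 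By Gulliksen's theorem on the Noetherianness of $\Ext$ modules over the ring of cohomology operators, $\bigoplus_{i\ge 0}\Ext^{i}_{\mathcal{P}}(\wt{M}, N)$ is a finitely generated graded module over $\mathcal{P}[t_{1},\ldots,t_{c}]$.

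\emph{Step 3.} Finally I would descend from $\mathcal{P}[t_{1},\ldots,t_{c}]$ to $\Sc$. The $\mathcal{P}$-module structure on $\Ext^{*}_{\mathcal{P}}(\wt{M}, N)$ induced from $\wt{M}$ agrees with the one induced from $N$, and the latter factors through $\R = \mathcal{P}/J$ since $JN = 0$; hence $J$ annihilates $\E(N)$. Therefore $\E(N)$ is a module over $\mathcal{P}[t_{1},\ldots,t_{c}]\big/J\,\mathcal{P}[t_{1},\ldots,t_{c}] = \R[t_{1},\ldots,t_{c}] = \Sc$, and, being finitely generated over $\mathcal{P}[t_{1},\ldots,t_{c}]$, it is finitely generated over the quotient ring $\Sc$. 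Keeping track of the two gradings --- the cohomological degree $i$ and the $\R$-degree $n$ --- throughout yields the asserted bi-graded finite generation.

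\emph{Main obstacle.} The one genuinely delicate point is the compatibility of the cohomology operators with the flat base change $A \to \mathcal{P}$: I must verify that the construction of the Eisenbud operators commutes with this base change, so that the operators $t_{j}$ on $\Ext^{*}_{\mathcal{P}}(\wt{M}, N)$ together with the action of $\R$ induced from its multiplication on $N$ really do give the intended $\Sc$-module structure on $\E(N)$. Once this is settled, everything else is either formal (flatness, hom-tensor adjunction, exactness of graded components) or is an application of Gulliksen's theorem over $\wt{Q}$.
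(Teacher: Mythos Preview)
Your argument is correct and takes a genuinely different route from the paper. The paper proves the theorem by a double induction: in the local case it inducts on the analytic spread $\spread(N)$, choosing a degree-one filter-regular element $u\in\R_1$, and uses a bi-graded Nakayama-type lemma (Lemma~\ref{mod-u}) to climb back from $\E(N/uN)$ to $\E(N)$; in the global case it introduces two further numerical invariants, $\ldim N$ and $\theta(\fL_N,N)$, to make a similar induction go through without filter-regular elements of degree one. Your approach bypasses all of this by absorbing $\R$ into the base ring: writing $\R$ as a quotient of $\mathcal{P}=A[y_1,\ldots,y_r]$, the flat base change $A\to\mathcal{P}$ identifies $\E(N)$ with $\bigoplus_i\Ext^i_{\mathcal{P}}(M\otimes_A\mathcal{P},N)$, and then Gulliksen's theorem over $\wt Q=Q[y_1,\ldots,y_r]$ gives finite generation in one stroke. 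What the paper's approach buys is a self-contained, more elementary argument that only ever invokes Gulliksen over $A$ itself; what your approach buys is brevity and a conceptual explanation (the Rees variable is just another polynomial variable). Your identified ``main obstacle'' is real but mild: since the lift $\wt F_\bullet\otimes_Q\wt Q$ of $F_\bullet\otimes_A\mathcal{P}$ is literally the base change of a lift $\wt F_\bullet$ of $F_\bullet$, the Eisenbud operators over $\mathcal{P}$ are $\tau_j\otimes 1$, and under $\Hom_{\mathcal{P}}(F_\bullet\otimes\mathcal{P},N)\cong\Hom_A(F_\bullet,N)$ they restrict, in each internal degree $n$, to the operators the paper uses; the $\R$-action via $N$ matches by functoriality of $\Ext$ in the second variable. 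Note incidentally that your argument never uses the hypothesis that $Q$ has finite Krull dimension, whereas the paper needs it to make $\ldim N$ finite.
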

An easy consequence of this result is that (b) holds (by taking $N = \bigoplus_{n\geq 0}I^nD$);  see Theorem \ref{motiv}.
A complete local complete intersection ring is a quotient of a regular local ring mod a regular sequence. So in this case  (b) holds from Theorem \ref{motiv}.
The proof of (b) for local complete intersections in general is a little technical; see Theorem \ref{motiv-ci}

We next discuss a surprising consequence of Theorem \ref{main}.
Let $(A,\m)$ be a  local complete intersection of codimension $c$.
Let $M, N$ be two finitely generated $A$-modules. Define
\[
\cx_A(M, N) = \inf\left \lbrace b \in \mathbb{N}  \left\vert \right.   \varlimsup_{n \to \infty} \frac{\mu(\Ext^{n}_{A}(M,N))}{n^{b-1}}  < \infty \right \rbrace
\]
In this section \ref{section-app2} we prove, see Theorem \ref{comp-stable},  that
\begin{equation*}
\cx_A(M, I^jN)  \quad \text{ is constant for all} \  \ j \gg 0. \tag{$\dagger$}
 \end{equation*}

We now describe in brief the contents of this paper.
In section one we give a module structure to $\E(N)$ over $\Sc$ (as in Theorem \ref{main}). We also discuss a few preliminaries.
The local case of Theorem \ref{main} is proved in section 2 while the global case is proved in section 3. In section 4 we prove our results on asymptotic primes. In section 5 we prove ($\dagger$).

\textit{Acknowledgements:} The author thanks Prof. L. L. Avramov and Prof. J. Herzog for many  discussions regarding this paper
\section{module structure}
Let $Q$ be a Noetherian ring  and let $\mathbf{f}= f_1,\ldots f_c$ be a regular sequence in $Q$.  Set $A = Q/(\mathbf{f})$.  Let $M$ be a finitely generated $A$-module with $\projdim_Q M$ finite.  \emph{We will not change $M$ throughout our discussion.}
Let $I$ an ideal in $A$ and let $N = \bigoplus_{n\geq 0}N_n$ be a finitely generated $\R = \bigoplus_{n\geq 0}I^nt^n$-module.
Set
$$\E(N)= \bigoplus_{i \geq 0}\bigoplus_{n \geq 0}\Ext^{i}_{A}(M,N_n).$$
 In this section we show $\E(N)$ is a bi-graded
$\Sc = \R[t_1,\ldots, t_c]$-module. We also discuss two preliminary results that we will need later in this paper.

\s Let $\mathbb{F}:  \cdots F_n \rt \cdots F_1 \rt F_0 \rt 0$
 be a free resolution of $M$ as a $A$-module.

 Let $t_1,\ldots t_c \colon \mathbf{F}(+2)  \rt \mathbf{F} $ be the \emph{Eisenbud-operators}
 \cite[section 1.]{Eisenbud-80} Then
 \begin{enumerate}
   \item $t_i$ are uniquely determined up to homotopy.
   \item $t_i, t_j$ commute up to homotopy.
 \end{enumerate}

\s   Set $T = A[t_1,\ldots, t_c]$ with $\deg t_i = 2$.  \\
In \cite{Gulliksen}  Gulliksen shows that if    $\projdim_Q M$ is finite then $\bigoplus_{i\geq 0}\Ext^{i}_{A}(M,L)$ is a finitely generated  $T$-module.

\s Let $N = \bigoplus_{n\geq 0} N_n$ be a f.g module over $\R$. Let $ u = xt^s $. The map
$$N_n \xrightarrow{u} N_{n+1} \quad \text{yields}$$
\[
\xymatrix
{
\Hom(\mathbf{F}, N_n) \ar@{->}[d]^{u}\ar@{->}[r]_{t_r}
&\Hom(\mathbf{F}, N_n)(+2) \ar@{->}[d]^{u}
\\
\Hom(\mathbf{F}, N_{n+s}) \ar@{->}[r]_{t_r}
&\Hom(\mathbf{F}, N_{n+s})(+2)
}
\]
Taking homology gives that

$\E(N)= \bigoplus_{i \geq 0}\bigoplus_{n \geq 0}\Ext^{i}_{A}(M,N_n)$ is a bi-graded $\Sc = R(I)[t_1,\ldots, t_c]$-module.

\begin{remark}
 1. For each $i$,  we have $ \bigoplus_{n \geq 0}\Ext^{i}_{A}(M,N_n)$ is a finitely generated $R(I)$-module.

2. For each $n$,  we have $ \bigoplus_{i \geq 0}\Ext^{i}_{A}(M,N_n)$ is a finitely generated $A[t_1,\ldots,t_c]$-module.

\end{remark}
We state two Lemma's which will help us in proving Theorem \ref{main}.

\s \label{not-sec2} \textbf{Notation}\\
(1) Let $N = \bigoplus_{n\geq 0}N_n$ be a graded $\R$-module. Fix $j \geq 0$. Set
\[
N_{\geq j} = \bigoplus_{n\geq j} N_n.
\]
$\E(N_{\geq j})$ is naturally isomorphic to the submodule  $$\E(N)_{\geq j} = \bigoplus_{i\geq 0} \bigoplus_{n\geq j}\E(N)_{ij}$$ of $\E(N)$.

(2) If $A\rt A^\pr$ is a ring extension and if $D$ is an $A$-module then set $D^\pr = D\otimes_A A^\pr$.  Notice that if $D$ is finitely generated $A$-module then $D^\pr $ is a finitely generated $A^\pr$-module.

(3) Set $\Sc^\prime = \Sc\otimes_A A^\pr$. Notice $\Sc^\pr$ is a finitely generated bi-graded $A^\pr$-algebra. Let $U =  \bigoplus_{i \geq 0}\bigoplus_{n \geq 0} U_{i,n} $
be a graded $\Sc$-module. Then
\[
U^\pr = U\otimes_A A^\pr =   \bigoplus_{i \geq 0}\bigoplus_{n \geq 0}  U_{i,n}^\pr
\]
is a graded $\Sc^\pr$-module.

\begin{lemma}\label{GEQj}
If $\E(N_{\geq j})$ is a finitely generated $\Sc$-module then $\E(N)$ is a finitely generated $\Sc$-module.
\end{lemma}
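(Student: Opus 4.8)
The plan is to run a dévissage on the short exact sequence of graded $\R$-modules
\[
0 \rt N_{\geq j} \rt N \rt \ov{N} \rt 0, \qquad \ov{N} := N/N_{\geq j} = \bigoplus_{n=0}^{j-1} N_n .
\]
Since $\R$ is Noetherian and $N$ is a finitely generated graded $\R$-module, each $N_n$ is a finitely generated $A$-module, so $\ov{N}$ is a finitely generated $A$-module concentrated in $\R$-degrees $< j$.

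First I would apply the functor $\E(-)$ to this sequence. Concretely, applying $\Hom_A(\mathbb{F},-)$ degreewise (where $\mathbb{F}$ is the fixed free resolution of $M$) turns it into a short exact sequence of complexes, since each $F_p$ is projective; and the Eisenbud operators $t_1,\ldots,t_c$ act compatibly on all three terms because they live on the resolution $\mathbb{F}$ and are therefore natural in the second variable. Passing to cohomology, and using the identification $\E(N_{\geq j}) \cong \E(N)_{\geq j}$ of \ref{not-sec2}, I obtain a long exact sequence of $\Sc$-modules
\[
\cdots \rt \E(N_{\geq j}) \xrightarrow{\ \alpha\ } \E(N) \xrightarrow{\ \beta\ } \E(\ov{N}) \rt \cdots ,
\]
from which I extract the short exact sequence of $\Sc$-modules $0 \rt \image \alpha \rt \E(N) \rt \image \beta \rt 0$.

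Next I would check that both outer terms are finitely generated over $\Sc$. The module $\image \alpha$ is a homomorphic image of $\E(N_{\geq j})$, which is finitely generated over $\Sc$ by hypothesis, so $\image \alpha$ is finitely generated. For $\image \beta \sub \E(\ov{N})$, I would observe that $\E(\ov{N}) = \bigoplus_{n=0}^{j-1}\bigoplus_{i\geq 0}\Ext^i_A(M,N_n)$, and by Gulliksen's theorem \cite{Gulliksen} (recalled above) each $\bigoplus_{i\geq 0}\Ext^i_A(M,N_n)$ is finitely generated over $T = A[t_1,\ldots,t_c]$, since $\projdim_Q M < \infty$. A finite direct sum of such modules is finitely generated over $T$, and $T$ is a subring of $\Sc$, so $\E(\ov{N})$ is finitely generated over $\Sc$. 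As $\R$ is Noetherian, $\Sc = \R[t_1,\ldots,t_c]$ is Noetherian, hence its submodule $\image \beta$ is finitely generated over $\Sc$. Being sandwiched between two finitely generated $\Sc$-modules, $\E(N)$ is then finitely generated over $\Sc$.

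The only genuinely delicate point is the compatibility invoked in the second step: that $N \mapsto \E(N)$ is functorial over $\Sc$, so that $\alpha$ and $\beta$ really are $\Sc$-linear. This is formal — the $\R$-action on $\E(N)$ comes from the multiplication maps $N_n \rt N_{n+s}$, the $t_i$-action comes from endomorphisms of $\mathbb{F}$, and both commute with any morphism of $\R$-modules after applying $\Hom_A(\mathbb{F},-)$ and taking homology — but it is the one place where the two gradings of $\Sc$ must be handled simultaneously. Everything after it is routine Noetherian bookkeeping.
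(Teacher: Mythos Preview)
Your proof is correct and follows essentially the same strategy as the paper: the piece of $\E(N)$ not covered by $\E(N_{\geq j})$ is the finite direct sum $\bigoplus_{n<j}\bigoplus_{i\geq 0}\Ext^i_A(M,N_n)$, which is finitely generated over $T=A[t_1,\ldots,t_c]$ by Gulliksen and hence over $\Sc\supseteq T$. The paper bypasses your long exact sequence entirely by using the identification $\E(N_{\geq j})\cong\E(N)_{\geq j}$ from \ref{not-sec2}(1) to write down the short exact sequence $0\to\E(N_{\geq j})\to\E(N)\to D\to 0$ of $\Sc$-modules directly --- since componentwise $N_{\geq j,n}\to N_n$ is either the identity or $0\to N_n$, the connecting maps vanish and your $\alpha,\beta$ are just the inclusion and projection.
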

\begin{proof}
Set $D = \E(N)/ \E(N_{\geq j})$
We have the following exact sequence of $\Sc$-modules
\[
0 \xar \E(N_{\geq j}) \xar \E(N) \xar D \xar 0.
\]
Using Gulliksen's result it follows that $D$ is a finitely generated $T = A[t_1,\ldots,t_c]$-module. Since $T$ is a subring of $\Sc$, we get that
$D$ is a finitely generated $\Sc$-module.
Thus if $\E(N_{\geq j})$ is a finitely generated $\Sc$-module then $\E(N)$ is a finitely generated $\Sc$-module
\end{proof}
\begin{lemma}\label{flat}[with notation as in \ref{not-sec2}(3)]
Let $A \rt A^\pr$ be a faithfully flat extension of rings and let $U =  \bigoplus_{i \geq 0}\bigoplus_{n \geq 0} U_{i,n} $  be a graded $\Sc$-module
such that $U_{i,n}$ is a finitely generated $A$-module for each $i,n \geq 0$.
If
   $U^\pr$ is a finitely generated $\Sc^\pr$-module.
then $U$ is a finitely generated $\Sc$-module.
\end{lemma}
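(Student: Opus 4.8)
The plan is to run a faithfully flat descent argument, using the bi-grading to cut out of $U$ a finitely generated $\Sc$-submodule whose base change to $A^\pr$ already equals $U^\pr$.

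First I would exploit that $U^\pr$ is a finitely generated bi-graded $\Sc^\pr$-module. Since $\Sc^\pr = \Sc\otimes_A A^\pr$ is bi-graded and $U^\pr = \bigoplus_{i,n\geq 0} U_{i,n}^\pr$ with $U_{i,n}^\pr = U_{i,n}\otimes_A A^\pr$, any finite generating set of $U^\pr$ over $\Sc^\pr$ may be replaced by its bi-homogeneous components; thus there is a finite set $\Lambda$ of bi-degrees such that the $\Sc^\pr$-submodule generated by $\bigoplus_{(i,n)\in\Lambda} U_{i,n}^\pr$ is all of $U^\pr$.

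Next, let $V$ be the $\Sc$-submodule of $U$ generated by $\bigoplus_{(i,n)\in\Lambda} U_{i,n}$. As $\Lambda$ is finite and each $U_{i,n}$ is a finitely generated $A$-module, $V$ is a finitely generated $\Sc$-module. Applying the exact functor $-\otimes_A A^\pr$ to the inclusion $V \hookrightarrow U$ produces an inclusion $V^\pr := V\otimes_A A^\pr \hookrightarrow U^\pr$ of $\Sc^\pr$-modules, and $V^\pr$ contains $\bigoplus_{(i,n)\in\Lambda} U_{i,n}^\pr$; since the latter generates $U^\pr$ over $\Sc^\pr$ and $V^\pr$ is an $\Sc^\pr$-submodule, $V^\pr = U^\pr$.

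Finally, from the exact sequence $0 \xar V \xar U \xar U/V \xar 0$ and flatness of $A^\pr$ over $A$ one gets $(U/V)\otimes_A A^\pr = U^\pr/V^\pr = 0$; faithful flatness then forces $U/V = 0$, i.e.\ $U = V$ is a finitely generated $\Sc$-module. The only step requiring any care is the passage to bi-homogeneous generators and the bookkeeping of the bi-gradings on $\Sc$, $\Sc^\pr$, $U$ and $U^\pr$, which is routine; I do not anticipate a real obstacle. Note that this argument uses neither Noetherianness of $A$ nor of $\Sc$ — only that each graded piece $U_{i,n}$ is finitely generated over $A$ and that $A \rt A^\pr$ is faithfully flat.
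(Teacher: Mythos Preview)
Your argument is correct and follows essentially the same route as the paper's proof: pick bi-homogeneous generators of $U^\pr$, use the bound on their bi-degrees together with the hypothesis that each $U_{i,n}$ is a finitely generated $A$-module to build a finitely generated $\Sc$-submodule $V\subseteq U$ with $V^\pr=U^\pr$, and then conclude $U=V$ by faithful flatness. The only cosmetic difference is that the paper takes \emph{all} bi-degrees in a rectangle $[0,a]\times[0,b]$ containing the degrees of the generators, whereas you take just the finite set $\Lambda$ of bi-degrees that actually occur; either choice works.
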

\begin{proof}
Choose a finite generating set $L$ of $U^\pr$. Suppose
$\deg_i u \leq a $ and $\deg_n u \leq b$ for each $u \in L$. For $0 \leq i \leq a$ and
$0\leq n \leq b$ choose a finite generating set $C_{i,n}$ of $U_{i,n}$. Set
\[
C = \bigcup_{i = 0}^{a}\bigcup_{n = 0}^{b} C_{i,n}.
\]
Set $C^\pr = \{ u\otimes 1 \mid u \in \C \}$.  Let $V$ be the submodule of $U$ generated by $C$. By construction $C^\pr$ generates $U^\pr$.
So $U^\pr = V^\pr$. Thus $(U/V) \otimes_A A^\pr = 0$. Since $A^\pr$ is a faithfully flat $A$-algebra we get $U = V$. So $U$ is a finitely generated $\Sc$-module.
\end{proof}
\section{The local case}
In this section we  prove Theorem \ref{main} when  $(Q,\n)$ is local. Let $\m$ be the maximal ideal of $A$. Set $k = A/\m$. Let $I$ be an ideal in $A$.
Set $F(I) = \R\otimes_A k = \bigoplus_{n\geq 0}I^n/\m I^n$ the \emph{fiber cone} of $I$.

\s Assume $N = \bigoplus_{n\geq 0}N_n$ is a finitely generated $\R$-module. Notice
$$F(N) = N \otimes_A k = \bigoplus_{n\geq 0}N_n/\m N_n $$
 is a finitely generated $F(I)$-module. Set
$$ \spread (N) := \dim_{F(I)}  N/\m N  \quad \text{the \emph{analytic spread} of}\  N. $$

\begin{proof}[Proof of Theorem \ref{main} in the local case]
\ { }  \\
Case 1. \emph{The residue field $k = A/\m$ is infinite.}

We induct on $\spread(N)$.

First assume $\spread(N) =0 $. This implies that $N_n/\m N_n = 0$ for all $n \gg 0$. By Nakayama Lemma, $N_n = 0$ for all $n \gg 0$; say
$N_n = 0$ for all $n \geq j$. Then $\E(N_{\geq j}) = 0$ and it is obviously a finitely generated $\Sc$-module. By \ref{GEQj} we get
that $\E(N)$ is a finitely generated $\Sc$-module.

 When $\spread(N) > 0$ then there exists $u = xt \in \R_1$ which is $N \oplus F(N)$-filter regular, i.e.,
 there exists $j$ such that
 $$(0 \colon_N u)_n = 0 \quad\text{and} \quad (0 \colon_{F(N)} u) = 0  \quad \text{for all} \ n \geq j. $$

 Set $N_{\geq j} = \bigoplus_{n\geq j}N_n$ and $U = N_{\geq j}/u N_{\geq j}$.  Notice we have an exact sequence of $\R$-modules
 \[
 0 \xar N_{\geq j}(-1) \xrightarrow{u} N_{\geq j}  \xar U \xar 0.
 \]
For each $n \geq j$ the functor $\Hom_A(M,-)$  induces the following long exact sequence of $A$-modules
\begin{align*}
0 &\xar \Hom_A(M,N_n) \xrightarrow{u} \Hom_A(M,N_{n+1}) \xar \Hom_A(M,U_{n+1}) \\
&\xar \Ext_A^1(M,N_n) \xrightarrow{u} \Ext^1_A(M,N_{n+1}) \xar \Ext^1_A(M,U_{n+1}) \\
&\cdots \quad \cdots \quad \cdots \quad \cdots \quad \cdots\quad \cdots \quad \cdots\quad \cdots \quad \cdots\\
&\xar \Ext_A^i(M,N_n) \xrightarrow{u} \Ext^i_A(M,N_{n+1}) \xar \Ext^i_A(M,U_{n+1}) \\
&\cdots \quad \cdots \quad \cdots \quad \cdots \quad \cdots\quad \cdots \quad \cdots\quad \cdots \quad \cdots
\end{align*}
Using the naturality of Eisenbud operators we have the following exact sequence of $\Sc$-modules
\[
\E(N_\geq j)(-1,0) \xrightarrow{(u,0)}  \E(N_\geq j) \xar \E(U)
\]
By construction
\[
\spread(U) = \spread(N_{\geq j}) - 1 = \spread(N)-1.
\]
By induction hypothesis $\E(U)$ is a finitely generated $\Sc$-module. Therefore by Lemma \ref{mod-u}
we get  $\E(N_{\geq j})$ is a \fg \  $\Sc$-module. Using \ref{GEQj} we get that $\E(N)$ is finitely generated $\Sc$-module.

Case 2.\emph{ The residue field $k$ is finite. }

In this case we do the standard trick.
 Let $Q^\pr = Q[X]_{\n Q[X]}$. Set $A^\pr = A \otimes_Q Q^\pr$. Notice $A^\pr = Q[X]_{\m Q[X]}$ is a flat $A$-algebra with residue field $k(X)$ which is infinite. Set $I^\pr = IA^\pr$ and $M^\pr  = M \otimes_{Q} Q^\pr = M \otimes_A A^\pr$.   Notice $\projdim_{Q^\pr} M^\pr$ is finite. Set $\R^\pr = \mathcal{R}(I^\pr)$ the Rees algebra of $I^\pr$. Notice that $N^\pr = N\otimes_A A^\pr $ is a finitely generated $\R^\pr$-module. Also
note that $\E(N^\pr) = \E(N)\otimes_A A^\pr$.

By Case 1 we have that $\E(N^\pr)$ is a \fg \ $\Sc^\pr$-module. So by Lemma \ref{flat} we get that $\E(N)$ is a \fg \ $\Sc$-module.
\end{proof}

The next Lemma is a bi-graded version of Lemma 2.8 (1) from \cite{Pu2}.
\begin{lemma}\label{mod-u}
Let $R$ be a Noetherian ring (not necessarily local) and let $B = \bigoplus_{i,j \geq 0}B_{i,j}$ be a finitely generated bi-graded
$R$-algebra with $B_{0,0} =R$. Set
$$ B_x = \bigoplus_{i\geq 0}B_{(i,0)} \quad\text{and} \quad B_y = \bigoplus_{j\geq 0}B_{(0,j)}. $$
 Let $V =  \bigoplus_{i,j \geq 0}V_{i,j}$ be a bi-graded $B$-module such that
\begin{enumerate}[\rm (1)]
  \item $V_{i,j}$ is a \fg \ $R$-module for each $i,j \geq 0$.
  \item For each $i\geq 0$,   $V_i = \bigoplus_{j\geq 0} V_{i,j}$ is \fg \ as a $B_y$-module
  \item  For each $j\geq 0$,   $V_j = \bigoplus_{i\geq 0} V_{i,j}$ is \fg \ as a $B_x$-module
  \item There exists $z \in B_{(r,0)}$ (with $r \geq 1$) such that we have the following exact sequence of  $B$-modules
  $$ V(-r,0) \xrightarrow{z} V \xrightarrow{\psi} D $$
  where $D$ is a \fg \ bi-graded $B$-module.
  \end{enumerate}
  Then $V$ is a finitely generated $B$-module
\end{lemma}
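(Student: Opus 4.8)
\emph{Sketch of proof.} The plan is to convert the statement into a single graded-Nakayama step along the first grading. First I would note that $B$ is Noetherian, since $R$ is Noetherian and $B$ is a \fg \ $R$-algebra. From the exact sequence in (4) we have $\ker \psi = zV$, so $\psi$ induces a bi-graded injection $V/zV \hookrightarrow D$; as $D$ is a \fg \ $B$-module over the Noetherian ring $B$, it follows that $V/zV$ is a \fg \ bi-graded $B$-module. This is the only role played by the hypothesis on $D$, and it supplies the finiteness that in an ordinary graded-Nakayama argument would come from $V/zV$ being concentrated in low degrees.

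Next I would choose finitely many bi-homogeneous generators of $V/zV$, of bi-degrees $(i_k,j_k)$, and set $a = \max_k i_k$ and $b = \max_k j_k$. Because $zV$ is a bi-graded submodule, each generator lifts to an element of $V_{i_k,j_k}$, so all the lifts lie in the bounded piece $W = \bigoplus_{0\leq i\leq a,\; 0\leq j\leq b} V_{i,j}$, and by construction $V = BW + zV$. By hypothesis (1) each $V_{i,j}$ is \fg \ over $R$, hence the finite direct sum $W$ is \fg \ over $R$, and therefore over $B$ (because $R = B_{0,0} \subseteq B$). Thus $V' := BW$ is a \fg \ bi-graded $B$-submodule of $V$ with $V = V' + zV$.

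It then remains to deduce $V = V'$. Set $\overline{V} = V/V'$; the relation $V = V' + zV$ gives $\overline{V} = z\overline{V}$. Now $\overline{V}$ is $\mathbb{N}^2$-graded and $z \in B_{(r,0)}$ with $r\geq 1$, so multiplication by $z$ strictly raises the first degree. If $\overline{V} \neq 0$, pick a nonzero bi-homogeneous element of minimal first degree, of bi-degree $(i_0,j_0)$ say; then $\overline{V}_{i_0,j_0} = (z\overline{V})_{i_0,j_0} = z\cdot \overline{V}_{i_0 - r,\, j_0}$, and this last term is $0$ since $i_0 - r < i_0$ (that component of $\overline{V}$ is either negatively graded or below the chosen minimum). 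This is a contradiction, so $\overline{V} = 0$ and $V = V'$ is a \fg \ $B$-module.

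The argument is largely bookkeeping, and I do not expect a serious obstacle; the one point that needs care is that $z$ has second degree $0$, so $V/zV$ need not be bounded in the second grading, and one must genuinely import its finiteness from $D$ rather than run a one-variable Nakayama in each grading separately. Hypotheses (2) and (3), and the generality of an arbitrary (non-local) base $R$, do not seem to be needed for this particular deduction; they presumably match the hypotheses that are convenient elsewhere in the paper. If one wished to avoid hypothesis (1), the alternative would be to control the bounded piece $W$ by a double induction on the pair $(a,b)$, using (2) and (3), which is the route of the one-graded predecessor \cite{Pu2}.
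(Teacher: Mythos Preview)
Your argument is correct, and it is genuinely different from the paper's. The paper never invokes Noetherianity of $B$; instead it replaces $D$ by $\image\psi$, lifts a finite generating set $W$ of $D$ to $V$, reads off a bound $c$ on the first degrees occurring in $W$, and then \emph{uses hypothesis~(2)} to adjoin, for each $i\leq c$, a finite $B_y$-generating set $P_i$ of the column $V_i=\bigoplus_j V_{i,j}$. The resulting finite set $G=W\cup\bigcup_{i\le c}P_i$ is shown to generate $V$ by induction on the lex order on $\mathbb{Z}_{\ge 0}^2$: for first degree $\le c$ the $P_i$'s already do the job, and for higher first degree one subtracts a $W$-combination to land in $\ker\psi=zV$ and descends by $r$ in the first coordinate.

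Your route trades hypothesis~(2) for the observation that $B$ is Noetherian, so that the submodule $V/zV\cong\image\psi\subseteq D$ is finitely generated outright; after that a one-line graded Nakayama in the first grading finishes. This is shorter and, as you observe, shows that neither~(2) nor~(3) is actually needed for the lemma (and (1) is only used in the mild form of knowing the lifts are finitely many). The paper's approach, by contrast, stays closer to the singly-graded predecessor \cite[2.8(1)]{Pu2} and makes the mechanism of ``low columns by hand, high columns by descent along $z$'' explicit; it would also survive in contexts where one does not want to appeal to Noetherianity of the total algebra but does have the columnwise finiteness of~(2).

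One small redundancy: you take $V'=BW$ with $W$ the full bounded box $\bigoplus_{i\le a,\,j\le b}V_{i,j}$ and then invoke~(1) to see $W$ is finitely generated over $R$. You could equally well let $V'$ be the $B$-submodule generated by the finitely many lifts themselves, which is finitely generated on the nose; then~(1) is not used at all.
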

\begin{proof}
\emph{Step 1.} We begin by reducing to the case when $\psi$ is \emph{surjective}. \\
Notice $D^\pr = \image \psi$ is a \fg \ bi-graded $B$-module. If $\psi^\pr \colon V \rt D^\pr$ is the map induced by $\psi$ then we have an exact
sequence
\[
 V(-r,0) \xrightarrow{z} V \xrightarrow{\psi^\pr} D^\pr\xar 0.
\]
Thus we may assume $\psi$ is surjective.

\emph{Step 2.} Choosing generators:\\
\emph{2.1}: Choose a \emph{finite} set $W$ in $V$ of homogeneous elements such that
$$ \psi(W) = \{ \psi(w) \mid w\in W \}$$
is a \emph{generating set} for $D$. \\
\emph{2.2}: Assume all the elements in $W$ have $x$-co-ordinate $\leq c$. \\
\emph{2.3}: For each $i \geq 0$, by hypothesis $V_i$ is a \fg \ $B_y$-module. So we may choose a \emph{finite set} $P_i$ of homogeneous elements in $V_i$ which generates $V_i$ as a $B_y$-module. \\
\emph{2.4}: Set
\[
G = W \bigcup \left( \bigcup_{i =0}^{c} P_i \right).
\]
Clearly $G$ is a \emph{finite} set.

\emph{Claim:} $G$ is a generating set for $V$. \\
Let $U$ be the $B$-submodule of $V$ generated by $G$.  It suffices to prove that $U_{i,j} = V_{i,j}$ for all $i,j \geq 0$.
By construction we have that for $ 0\leq i \leq c$
\begin{equation*}
U_{i,j} = V_{i,j} \quad \text{for each }  j\geq 0 \tag{*}
\end{equation*}

Let $\preceq$ be the\emph{ lex-order} on $X = \Z_{\geq 0} \times \Z_{\geq 0}$. It is well-known that $\preceq$ is a total order on $X$. So we can prove our result by induction on $X$ \wrt \  the total order $\preceq$.

The base case is $(0,0)$.   \\
In this case $U_{0,0} = V_{0,0}$ by (*).

Let $(i,j) \in X\setminus \{ (0,0) \}$ and assume that for all $(r,s) \prec (i,j)$; we have  $U_{i,j} = V_{i,j}$.

Subcase 1. \ \  $i \leq c$. \\
By (*) we have $U_{i,j} = V_{i,j}$.

Subcase 2. \  \ $i > c$.

Let $p \in V_{i,j}$.
By construction,  note that there exists $w_1, \ldots, w_m \in W \subseteq C$ such that
\[
\psi(p) = \sum_{l = 0}^{m} h_l \psi(w_l) \quad \text{where $h_i \in B$}.
\]
We may assume that $\deg h_l w_l = (i,j)$ for each $l$
Set $p^\pr = \sum_{i = 0}^{m} h_i w_i \in V_{i,j}$.
Notice \\
1. $p^\pr \in U_{i,j}$. \\
2. $p - p^\pr \in \ker \psi$. \\
 So
\[
p - p^\pr = z\bullet q \quad \text{where} \ q \in V_{(i-r,j)}.
\]
If $q =0 $ then $p = p^\pr \in U_{i,j}$. \\
Otherwise note that $ (i-r,j) \prec (i,j)$. So by induction hypothesis
$q \in U_{(i-r,j)}$. It follows that $p \in U_{i,j}$.

Thus $V_{i,j} \subseteq U_{i,j}$. Since $U_{i,j} \subseteq V_{i,j}$ by construction it follows that
$U_{i,j} = V_{i,j}$.
The result follows by induction on $X$.
\end{proof}

 \section{The global case:}
We need quite a few preliminaries to prove the global case.
See \ref{diff} for the difficulty in going from local to the global case.
 Note that in the local case we proved the result by inducting on $\spread(N)$. This is
unavailable to us in the global situation as there are usually infinitely many maximal ideals in a global ring.
Most of this section we will discuss    two invariants of a graded $\R$-module  $N = \bigoplus_{n\geq 0}N_n$. We will use these invariants to prove Theorem
\ref{main} by induction.

\s\label{Nc-global} \emph{Notation and Conventions:}  We take dimension of the zero module to be $-1$. We define the zero-polynomial to have degree $-1$. \\ Let $\fP \in \Spec Q$.  If $\fP \supseteq \bF$ then set
$\fp = \fP/\bF$. If $\fP \nsupseteq \bF$ then any $A$-module localized at $\fP$ is zero. So assume $\fP \supseteq \bF$. Notice
\begin{enumerate}
  \item $\R_\fp \cong \mathcal{R}(IA_\fp)$ and $\Sc_\fp \cong \R_{\fp}[t_1,\ldots,t_c]$
  \item $M_\fp = M_\fP$ has finite projective dimension as a $Q_\fP$-module.
  \item $\E(N)_\fp \cong \E(N_\fp)$.
\end{enumerate}

\s \label{diff} \emph{The difficulty in going from local to global:} \\
For each $\fp \in \Spec A$ it follows from \ref{Nc-global} that $ \E(N_p)$ is a \fg \ $A_\fp$-module. Usually $\Supp_A \E(N)$ will
be an infinite set. So we cannot apply the local case and conclude.

The situation when  $\Supp_A \E(N)$ is a finite set will help in the  base step of our induction argument to prove Theorem \ref{main}. So we show it
separately.
\begin{lemma}\label{semi-local}
If $\Supp_A \E(N)$ is a finite set then $\E(N)$ is a \fg \ $\Sc$-module.
\end{lemma}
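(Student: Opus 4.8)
The plan is to localize at the finitely many primes in the support, invoke the already-established local case of Theorem~\ref{main} at each of them, and then patch the resulting finite local generating sets into a single finite global one. Finiteness of $\Supp_A \E(N)$ is used precisely to keep this patching finite.

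Write $\Supp_A \E(N) = \{\fp_1,\ldots,\fp_s\}$, and for each $k$ let $\fP_k \in \Spec Q$ be the prime with $\fP_k \supseteq \bF$ and $\fp_k = \fP_k/\bF$. The first step is to check that $\E(N)_{\fp_k}$ is a finitely generated $\Sc_{\fp_k}$-module for every $k$. Indeed $Q_{\fP_k}$ is local of finite Krull dimension, $\bF$ is still a regular sequence in it, $\projdim_{Q_{\fP_k}} M_{\fp_k}$ is finite by \ref{Nc-global}(2), and $N_{\fp_k}$ is a finitely generated $\R_{\fp_k}\cong \mathcal{R}(IA_{\fp_k})$-module; so the local case of Theorem~\ref{main} applies and gives that $\E(N_{\fp_k})$ is a finitely generated $\Sc_{\fp_k}$-module, which is $\E(N)_{\fp_k}$ by \ref{Nc-global}(3).

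The second step is to produce a finitely generated bi-graded $\Sc$-submodule $V \sub \E(N)$ with $V_{\fp_k} = \E(N)_{\fp_k}$ for all $k$. For each $k$ pick finitely many bi-homogeneous generators of $\E(N)_{\fp_k}$ over $\Sc_{\fp_k}$; clearing denominators, each has the form $\xi/s$ with $\xi \in \E(N)$ bi-homogeneous and $s \in A\setminus \fp_k$. Let $V$ be the $\Sc$-submodule of $\E(N)$ generated by all these elements $\xi$ (finitely many in total). Then $V$ is finitely generated and bi-graded, and since $\xi \in V$ forces $\xi/s \in V_{\fp_k}$, the chosen generators of $\E(N)_{\fp_k}$ lie in $V_{\fp_k}$; as $V_{\fp_k} \sub \E(N)_{\fp_k}$ automatically, we get $V_{\fp_k} = \E(N)_{\fp_k}$.

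Finally, set $X = \E(N)/V$. From $0 \to V \to \E(N) \to X \to 0$ we get $\Supp_A X \sub \Supp_A \E(N) = \{\fp_1,\ldots,\fp_s\}$, whereas $X_{\fp_k} = \E(N)_{\fp_k}/V_{\fp_k} = 0$ for each $k$; hence $\Supp_A X = \emptyset$ and therefore $X = 0$, so $\E(N) = V$ is a finitely generated $\Sc$-module. The only place demanding a small comment is this last implication, ``empty support implies zero module'', which is valid for an arbitrary (not necessarily finitely generated) module because a nonzero element has a proper annihilator, hence survives in some localization. I do not foresee a genuine obstacle; the content lies entirely in having the local case of Theorem~\ref{main} available, together with the bookkeeping of the patching.
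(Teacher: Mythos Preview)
Your proof is correct and follows essentially the same route as the paper: localize at the finitely many primes in the support (where finite generation is guaranteed by the local case of Theorem~\ref{main} via \ref{Nc-global}), lift a finite set of generators to $\E(N)$, and conclude by showing the quotient has empty support. The paper's version is terser---it simply asserts that a finite subset $C$ with the required property can be chosen---whereas you spell out the clearing of denominators and the appeal to the local case, but the argument is the same.
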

\begin{proof}
We may choose a finite subset $C$ of $\E(N)$ such that
its image in $\E(N)_\fp$ generates $\E(N)_\fp$ for each $\fp \in \Supp_A \E(N)$.
Set $U$ to be the \fg \ submodule of $\E(N)$ generated by $C$.

Set $D = \E(N)/U$. Notice that $D_\fp = 0 $ for each $\fp \in \Spec A$. So $D = 0$. Therefore $\E(N) = U$ is a
\fg\ $A$-module.
\end{proof}
\s  \label{ann} \emph{ First Inductive device:}  \\
Since $N$ is a \fg \ $\R$ -module we have
$\ann_A N_i  \subseteq \ann_A N_{i+1}$ for all $i \gg 0$. Since $A$ is Noetherian it follows that
$\ann_A N_n$ is constant for all $n \gg 0$. Call this stable value $\fL_N$.
This enables us to define
\emph{Limit dimension} of $N$.
$$\ldim N = \lim_{n \rt \infty} \dim_A N_n  = \dim A/\fL_N. $$
  Since $A$ has finite Krull-dimension we get that $\ldim N $ is finite.

\s \label{lim-ann-P} Let $\fP$ be a prime in ideal in $A$.
If $D$ is a finitely generated $A$-module then $$\ann_{A_\fP}D_\fP = \left( \ann_A D \right)_\fP = \left(\ann_A D \right)A_\fP. $$
Therefore
$$(\fL_N)_\fP = \fL_{N_\fP}. $$

 \s \label{case-init} Note that if $\ldim(N) = -1$ then $N_j = 0$ , say for all $j \geq j_0$. So $\E(N_{\geq j_0}) = 0$. Using \ref{GEQj} it follows that $\E(N)$ is a \fg \ $\Sc$-module. The first non-trivial case is the following
\begin{proposition}\label{case-zero}
If $\ldim(N) = 0$ then $\E(N)$ is a \fg \ $A$-module.
\end{proposition}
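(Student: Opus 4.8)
The plan is to reduce, via the machinery already set up, to a situation where $N$ is concentrated on finitely many primes, and then invoke Lemma~\ref{semi-local}. The hypothesis $\ldim(N) = 0$ means $\dim A/\fL_N = 0$, so $A/\fL_N$ is an Artinian ring; equivalently $\Supp_A(N_n)$ is contained in a fixed finite set of maximal ideals $\{\m_1,\ldots,\m_s\}$ for all $n \gg 0$. The goal is to show $\Supp_A \E(N)$ is finite, since then Lemma~\ref{semi-local} finishes the argument.

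First I would use Lemma~\ref{GEQj} to replace $N$ by $N_{\geq j}$ for a suitable $j$, so that $\ann_A N_n = \fL_N$ for all $n$ in the truncated module; this does not change $\ldim$ and it costs nothing since a finitely generated $A$-module summand is harmless. Now every $N_n$ is a module over the Artinian ring $A/\fL_N$, hence has finite length and support inside $V(\fL_N) = \{\m_1,\ldots,\m_s\}$. Consequently, for each $i$ and $n$, $\Ext^i_A(M,N_n)$ is also supported inside $\{\m_1,\ldots,\m_s\}$ (an Ext module's support is contained in the intersection of the supports of its arguments, or more crudely: localizing at any $\fp \notin V(\fL_N)$ kills $N_n$ and hence the Ext). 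Therefore $\Supp_A \E(N) \subseteq \{\m_1,\ldots,\m_s\}$ is a finite set, and Lemma~\ref{semi-local} gives that $\E(N)$ is a finitely generated $A$-module.

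The one point that needs a little care, and which I expect to be the main (mild) obstacle, is the passage between ``$\ann_A N_n$ is eventually constant equal to $\fL_N$'' and ``$N_n$ is a finite-length $A/\fL_N$-module for $n \gg 0$''. For the truncated module this is clean: each $N_n$ is annihilated by $\fL_N$, and $A/\fL_N$ is Artinian because its dimension is $\ldim(N) = 0$ and it is Noetherian; a finitely generated module over an Artinian ring has finite length and its support is the (finite) set of maximal ideals of $A/\fL_N$. One should double-check that $A/\fL_N$ is indeed Artinian rather than merely zero-dimensional-with-possible-non-Noetherian-issues, but $A$ is Noetherian so this is automatic. Everything else is formal: the support statement for $\Ext$, and the fact that finite support plus finite generation of each graded piece (which holds here by Gulliksen's theorem applied degree-by-degree, or directly since each $N_n$ is finitely generated and $\projdim_Q M < \infty$) lets Lemma~\ref{semi-local} apply verbatim.

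Alternatively, one could avoid even mentioning Artinian rings: for each $n \gg 0$ we have $\Supp_A N_n \subseteq V(\fL_N)$, a finite set of maximal ideals, so $\Supp_A \Ext^i_A(M,N_n) \subseteq V(\fL_N)$ for all $i$, whence $\Supp_A \E(N_{\geq j}) \subseteq V(\fL_N)$ is finite and Lemma~\ref{semi-local} yields that $\E(N_{\geq j})$ is a finitely generated $A$-module; then Lemma~\ref{GEQj} upgrades this to $\E(N)$ being finitely generated over $\Sc$, and since $\E(N)/\E(N_{\geq j})$ is finitely generated over $A$ by Gulliksen, $\E(N)$ is in fact finitely generated over $A$ as well. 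This is the version I would write up, as it keeps the emphasis on the support computation, which is the real content.
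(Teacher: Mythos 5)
Your argument is essentially the paper's: truncate via Lemma~\ref{GEQj}, observe that $A/\fL_N$ is Artinian so $\Supp_A \E(N_{\geq j})$ lies in the finite set $\Supp_A A/\fL_N$, invoke Lemma~\ref{semi-local}, then Lemma~\ref{GEQj} again. That core is correct and matches the paper's proof.

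One caution about your closing sentence. The phrase ``finitely generated $A$-module'' in the statement of Proposition~\ref{case-zero} (and in the last line of the proof of Lemma~\ref{semi-local}) is almost certainly a typo for ``finitely generated $\Sc$-module'': that is what the statement of Lemma~\ref{semi-local} asserts, and that is what the induction in Theorem~\ref{main} actually uses. Your attempt to honor the literal wording by concluding genuine $A$-finite-generation has a gap: Gulliksen's theorem gives that $\E(N)/\E(N_{\geq j}) = \bigoplus_{n<j}\bigoplus_i \Ext^i_A(M,N_n)$ is finitely generated over $T = A[t_1,\ldots,t_c]$, \emph{not} over $A$; in general this quotient, and $\E(N)$ itself, will not be $A$-finite (the $\Ext$'s need not vanish in high cohomological degree). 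So the correct conclusion to reach, and the one the paper needs, is finite generation over $\Sc$; you should drop the final ``in fact finitely generated over $A$ as well.''
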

 \begin{proof}
 This implies that $A/\fL_N$ is Artinian.
  Say $\dim N_n = 0$ for $n \geq r$.
Clearly
$$\Supp_A \E(N_{\geq r}) \subseteq \Supp_A A/\fL_N \quad \text{  a finite set of maximal ideals in $A$}. $$
      It follows from \ref{semi-local} that   $\E(N_{\geq r})$ is a \fg \ $\Sc$-module.
Using \ref{GEQj} we get that $\E(N)$ is a \fg \ $\Sc$-module.
 \end{proof}

 \s\label{filt-deg-r}\textbf{higher degree filter-regular element}

  We do not have filter regular elements of degree $1$ in the global situation. However we  can do the following:

  Set $E = N/H^0_{R_+}(N)$.  Assume $E \neq 0$.  As $H^0_{R_+}(E) = 0$ there exists homogeneous  $u \in R_+ $ such that
  $u$  is $E$-regular, \cite[1.5.11]{BH}. Say $\deg u = s$. Since $E_n = N_n$ for all $n \gg 0$  it follows that the map
  $N_i \rt N_{i+s}$ induced by multiplication by $u$ is injective for all $i \gg 0$. We will say that $u$ is a $N$  filter-regular element
  of degree $s$.

\s \label{second}\emph{ The second inductive device:}  \\
We now discuss a global invariant of $N$ which patches well with  local ones.

\s \label{invar-local} \textbf{The local invariant} \\
Let $(A,\m)$ be local and let $W = \bigoplus_{n \geq 0} W_n$ be a finitely generated $\R$-module. For convenience we
 assume that $\fL_W = \ann_A W_n $ for all $n \geq 0$. Let $\fA \subseteq \fL_W$ be an ideal.
Fix $j \geq 0$. Set
\[
d_\fA( W, j) = \begin{cases}0, & \text{if $\dim W_j < \dim A/\fA$,} \\ e(\m, W_j);  &\text{otherwise.}                            \end{cases}
\]
Note that $W_j$ is an $A/\fA$-module and that $d_\fA( W, j)$ is the modified multiplicity function on the $A/\fA$-module $W_j$.

\begin{remark}\label{c-ideals}
Notice if $\dim W_j = \dim A/\aF$ then $$d_\fA( W, j) = d_{\fL_W}(W,j).$$
\end{remark}
Let $\mu(D)$ denote the minimal number of generators of an $A$-module $D$.
\begin{lemma}\label{order-lemma}
The function $d_\fA( W, -)$ is polynomial of degree $\leq \mu(I) - 1$.
\end{lemma}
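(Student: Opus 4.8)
The plan is to recognize $j\mapsto d_\fA(W,j)$ as the value at $W_j$ of a single \emph{additive} function on finitely generated $A/\fA$-modules, and then to run the classical graded-Hilbert-function induction. Since $(A,\m)$ is local, write $I=(x_1,\dots,x_s)$ with $s=\mu(I)$; then $\R$ is generated as an $A$-algebra in degree one by $x_1t,\dots,x_st$, so $\R$ is a homomorphic image of the polynomial ring $A[T_1,\dots,T_s]$ with $\deg T_i=1$, and $W$ is a finitely generated graded $A[T_1,\dots,T_s]$-module. As $\fA\subseteq\fL_W=\ann_A W_n$ for all $n$, we have $\fA W=0$, hence $W$ is in fact a finitely generated graded module over $S:=(A/\fA)[T_1,\dots,T_s]$, with $S_0=A/\fA$.

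Next I would introduce the additive function. Put $d=\dim A/\fA$ and let $\fp_1,\dots,\fp_t$ be the minimal primes of $\fA$ with $\dim A/\fp_l=d$. For a finitely generated $A/\fA$-module $V$ set
\[
\lambda(V)=\sum_{l=1}^{t}\ell_{A_{\fp_l}}\!\big(V_{\fp_l}\big)\,e(\m,A/\fp_l)\in\Z_{\ge 0}.
\]
Because localization is exact and length is additive, $\lambda$ is additive on short exact sequences of finitely generated $A/\fA$-modules. Moreover $\lambda(W_j)=d_\fA(W,j)$: if $\dim W_j<d$ then $\fp_l\notin\Supp W_j$ for every $l$, so both sides are $0$; and if $\dim W_j=d$ then the associativity formula for Hilbert--Samuel multiplicity gives $e(\m,W_j)=\sum_{l:\fp_l\in\Supp W_j}\ell_{A_{\fp_l}}\!\big((W_j)_{\fp_l}\big)\,e(\m,A/\fp_l)=\lambda(W_j)$, which is $d_\fA(W,j)$ by definition. (This is just the observation in \ref{invar-local} that $d_\fA(W,j)$ is the modified multiplicity function of the $A/\fA$-module $W_j$.)

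It then suffices to prove the following standard fact: if $S_0$ is a Noetherian ring, $\lambda$ a $\Z$-valued additive function on finitely generated $S_0$-modules, and $W$ a finitely generated graded $S_0[T_1,\dots,T_r]$-module, then $n\mapsto\lambda(W_n)$ agrees with a polynomial of degree $\le r-1$ for $n\gg 0$. One inducts on $r$. For $r=0$, $W$ is concentrated in finitely many degrees, so $\lambda(W_n)=0$ for $n\gg 0$. For $r\ge 1$, multiplication by $T_r$ gives, for each $n$, an exact sequence of finitely generated $S_0$-modules
\[
0\to (0\colon_W T_r)_{n-1}\to W_{n-1}\xrightarrow{T_r} W_n\to (W/T_rW)_n\to 0,
\]
where $(0\colon_W T_r)$ and $W/T_rW$ are finitely generated graded $S_0[T_1,\dots,T_{r-1}]$-modules (they are killed by $T_r$, and $S_0[T_1,\dots,T_r]$ is Noetherian). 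Additivity of $\lambda$ yields $\lambda(W_n)-\lambda(W_{n-1})=\lambda\big((W/T_rW)_n\big)-\lambda\big((0\colon_W T_r)_{n-1}\big)$, which by the inductive hypothesis agrees for $n\gg 0$ with a polynomial of degree $\le r-2$; since a numerical function whose first difference is eventually a polynomial of degree $\le e$ is itself eventually a polynomial of degree $\le e+1$ (cf.\ \cite[Section~4.1]{BH}), the induction closes. Applying this with $S_0=A/\fA$, $r=s=\mu(I)$, and $\lambda$ as above shows $d_\fA(W,-)$ is polynomial of degree $\le\mu(I)-1$.

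The only genuinely non-formal step is the identification $\lambda(W_j)=d_\fA(W,j)$: one must check that the clause ``$0$ if $\dim W_j<\dim A/\fA$'' in the definition of $d_\fA$ is exactly what the associativity formula produces, so that $d_\fA(W,-)$ really is the restriction of one fixed additive function to the graded components of $W$. Once that is in hand, the bound $\mu(I)-1$ falls out of the induction precisely because $\R$ requires $\mu(I)$ algebra generators in degree one.
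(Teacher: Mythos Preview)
Your proof is correct and takes a genuinely different route from the paper's. The paper first assumes the residue field is infinite, chooses a minimal reduction $\bx = x_1,\ldots,x_r$ of $\m(A/\fA)$, and invokes Serre's theorem \cite[4.7.6]{BH} to write $e(\m,W_j)=e(\bx,W_j)=\sum_{i=0}^{r}(-1)^i\ell\big(H_i(\bx,W_j)\big)$; then each $H_i(\bx,W)=\bigoplus_{j} H_i(\bx,W_j)$ is a finitely generated graded module over $T/\bx T$ with $T=\R/\fA\R$, and since $(T/\bx T)_0$ is Artinian and $(T/\bx T)_1$ is generated by at most $\mu(I)$ elements, each length function $j\mapsto\ell(H_i(\bx,W_j))$ is polynomial of degree $\le\mu(I)-1$. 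You instead identify $d_\fA(W,-)$ directly as the restriction to the $W_j$ of a single additive function on $A/\fA$-modules (via the associativity formula for multiplicity) and run the standard Hilbert-polynomial induction with that function. Your argument is somewhat more self-contained---no Koszul homology, no Serre formula, and no reduction to infinite residue field---while the paper's approach has the virtue of expressing $d_\fA(W,j)$ explicitly as an alternating sum of lengths, which could be useful for finer bookkeeping.
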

\begin{proof}
We may assume that the residue field of $A$ is infinite.
Set $T = \R/\fA \R = \bigoplus_{n\geq 0}T_n$. Notice $T_0 = A/\fA$. Let $\bx = x_1,\ldots,x_r$ be a minimal reduction of $\m( A/\fA)$.  So $e(\m, -) = e(\bx,-)$, cf. \cite[4.6.5]{BH}.
 Then by
a result due to Serre, cf., \cite[4.7.6]{BH}, we get that
\[
e(\bx, W_j) = \sum_{i=0}^{r}(-1)^i\ell \left( H_i(\bx, W_j) \right)
\]
Notice  $H_i(\bx, W) = \bigoplus_{j \geq 0}  H_i(\bx, W_j)$ is a finitely generated $T/\bx T$-module. Notice
$(T/\bx T)_0 = A/(\fA + \bx)$ is Artinian. Furthermore  $(T/\bx T)_1$ is a quotient of $\R_1$ and so can be generated by
$\mu(I)$ elements.
Therefore the function  $j \mapsto  \ell \left( H_i(\bx, W_j) \right)$ is polynomial of degree $\leq \mu(I) - 1$. The result follows.
\end{proof}

\begin{definition}
$\theta(\fA, W) = $ degree of the polynomial function $d_\fA( W, -)$.
\end{definition}

Clearly $\theta(\fA, W)$
 is non-negative \ff \ $\ldim W = \dim R/\fA$ and is $-1$ otherwise.

\s \label{invar-global-local} \textbf{The global invariant} \\
Let $A$ be a Noetherian ring with finite Krull dimension. Let $I = (x_1,\ldots,x_s)$ be an ideal in $A$. Let $W = \bigoplus_{n \geq 0} W_n$ be a finitely generated $\R$-module. For convenience
we assume that $\fL_W = \ann_A W_n $ for all $n \geq 0$. Let $\fA \subseteq \fL_W$ be an ideal.

 Set
\[
\C(\fA) = \{ \m \mid \m \in \mSpec(A), \m \supseteq \fA \  \& \  \dim (A/\fA)_\m = \dim A/\fA \}.
\]
Let $I = (x_1,\ldots, x_s)$. If $\m \in \C(\fA)$ then note that
\begin{enumerate}[\rm (a)]
  \item $W_\m = \bigoplus_{n \geq 0} (W_n)_\m$.
  \item $\fL_{W_\m} = (\fL_W)_\m$. So $\fA_\m \subseteq \fL_{W_\m}$
  \item $\theta(\fA_\m, W_\m) \leq s -1$.
\end{enumerate}
Define
\[
\theta(\fA, W) = \max \{ \theta(\fA_\m, W_\m) \mid \m \in \C(\fA) \}.
\]
By (c) above we get that $\theta(\fA, W)$ is defined and is $\leq s- 1$.

\s\label{t-prop} \textbf{Properties of $\theta(\fA, W)$}. \\
We describe some properties of $\theta(\fA, W)$ we need for the proof of global case of Theorem \ref{main}.
Let $I = (x_1,\ldots, x_s)$.
\begin{enumerate}[\rm (i)]
  \item $\theta(\fA, W) \leq s-1$. \\
  This is clear.
  \item If $\fL_W \neq A$ then $\theta(\fL_W, W) \geq 0$. \\
  It suffices to consider the local case. Note that then $d_{\fL_W}( W, j) > 0$ for all $j \geq 0$. It follows that
  $\theta(\fL_W, W) \geq 0$.
  \item $\theta(\fA, W) = -1$ \ff \ $\ldim W < \dim A/\fA$.\\
  This follows from the following four facts:
  \begin{enumerate}[\rm (a)]
  \item
  By definition of $\C(\fA)$ we have that
  \[
  \dim A/\fA =  \dim (A/\fA)_\m \ \text{for each} \  \m \in \C(\fA).
  \]
    \item
    If $\theta(\fA, W) = -1$ then $\theta(\fA_\m, W_\m) = -1$ for all $\m \in \C(\fA)$. This is equivalent to saying that
    $\ldim W_\m < \dim (A/\fA)_\m$ for all $\m \in \C(\fA)$
    \item Note that since $\fA \subseteq \fL_W$ we get that
    \[
    \ldim W = \max \{ \ldim W_\m \mid \m \in \C(\fA) \}
    \]
  \end{enumerate}
  \item
  If $\theta(\fA, W) \geq 0$ then $\theta(\fL_W, W) \leq \theta(\aF, W)$. \\
  By previous item we get that $\ldim W = \dim A/\fA$. By hypothesis we also have
  $\aF \subseteq \fL_W$. Since $\dim A/\fA = \dim A/\fL_W$ it follows that
  $\C(\fL_W) \subseteq \C(\fA)$.   Using  \ref{c-ideals}
it follows that $\theta(\fL_W, W) \leq \theta(\fA, W)$.
\item
$u \in \R_+$ be homogeneous of degree $b$. Assume $u$ is $W$-filter regular. Set $E = W/uW$. Then
\[
\theta(\fL_W, E) \leq \theta(\fL_W, W) - 1.
\]
Suppose
$\theta(\fL_W, E) = \theta((\fL_W)_\fp, E_\fp)$ for some $\fp  \in \C(\fA)$. Since $u$ is $W$ filter regular; multiplication by $u$ induces the following exact sequence
\[
0 \rt W_{j-b} \rt W_j \rt E_j \rt 0 \quad \text{for all } \ j \gg 0.
\]
Localization at $\fp$ yields an exact sequence
\[
0 \rt (W_{j-b})_\fp \rt (W_j)_\fp \rt (E_j)_\fp \rt 0 \quad \text{for all } \ j \gg 0.
\]
Since $d_{{\fL_W}_\fp}(-, -)$ is an additive functor on $(A/\fL_W)_\fp)$-modules we get that
\[
\theta\left((\fL_W)_\fp, E_\fp \right) = \theta\left((\fL_W)_\fp, W_\fp \right) -1.
\]
The result follows since
\begin{align*}
\theta\left((\fL_W)_\fp,E_\fp \right) &= \theta(\fL_W, E)  \\
\text{and} \ \  \theta\left((\fL_W)_\fp, W_\fp \right) &\leq \theta(\fL_W, W).
\end{align*}
\end{enumerate}

We now give a proof of our main result
\begin{proof}[Proof of Theorem \ref{main}]
We induct on $\ldim N $.

If $\ldim N = -1, 0$ then the result follows from \ref{case-zero}.

Assume $\ldim N \geq 1$ and assume the result holds for all $\R$-modules $E$ with $\ldim E \leq \ldim N - 1$.
Let $x \in \R_+$ be homogeneous and a $N$-filter regular element. Set $D = N/xD$.
By Lemma \ref{GEQj} it suffices to assume the case when $x$ is $N$-regular.

We now induct on $\theta(\fL_N, N)$
If $ \theta(\fL_N, N) = 0$ then $\theta(\fL_N, D) \leq -1$, by \ref{t-prop}(v).  Using \ref{t-prop}(iii) we get that $$\ldim D < \dim A/\fL_N = \ldim N.$$
By induction hypothesis (on  $\ldim $) the module $\E(D)$ is finitely generated $\Sc$-module. The short exact sequence of $\R$-modules
\[
0 \rt N(-r)\xrightarrow{x} N \rt D \rt 0
\]
induces an exact sequence of $\Sc$-modules
\[
\E(N)(-r) \xrightarrow{x} \E(N) \rt \E(D).
\]
By Lemma \ref{mod-u} we get that $\E(N)$ is a finitely generated  $\Sc$-module.

We assume the result if $\theta(\fL_N, N)  \leq i$ and prove when $\theta(\fL_N, N) = i + 1$.
Let $D$ be as above. So $\theta(\fL_N, D) \leq i$, by \ref{t-prop}(v). \\
If $\theta(\fL_N, D) = -1$ then the argument as above yields $\E(N)$ to be a finitely generated $\Sc$-module.

If $\theta(\fL_N,D) \geq 0$ then by \ref{t-prop}(iv) we get that $\theta(\fL_D, D) \leq \theta(\fL_N, D) \leq i$. So by
induction hypothesis on $\theta(-,-)$ we get that $\E(D)$ is a finitely generated $\Sc$-module. By an argument similar to the
above we get that $\E(N)$ is a finitely generated $\Sc$-module.
\end{proof}

 \section{Application I \\ Asymptotic Associated primes}
In this section we give a proof of
our main motivating question Theorem \ref{motiv-ci}. We also give two proofs of Theorem \ref{motiv}.

\begin{theorem}\label{motiv}
Let $Q$ be a Noetherian ring with finite Krull dimension and let $\mathbf{f}= f_1,\ldots f_c$ be a regular sequence in $Q$.  Set $A = Q/(\mathbf{f})$.  Let $M$ be a finitely generated $A$-module with $\projdim_Q M$ finite.
Let $I$ an ideal in $A$ and let $N = \bigoplus_{n\geq 0}N_n$ be a finitely generated $\R$-module.  Then
\[
 \bigcup_{n\geq 0}\bigcup_{i\geq 0} \Ass \ \Ext^{i}_{A}(M,N_n)  \quad \text{is a finite set}.
\]
Furthermore there exists $i_0, n_0$ such that for all $ i \geq i_0$
and $n \geq n_0$ we have
\begin{align*}
 \Ass \  \Ext^{2i}_{A}(M,N_n) &= \Ass \  \Ext^{2i_0}_{A}(M,N_{n_0})   \\
\Ass \  \Ext^{2i+1}_{A}(M,N_n) &= \Ass \  \Ext^{2i_0 +1 }_{A}(M,N_{n_0})
\end{align*}
\end{theorem}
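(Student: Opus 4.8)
The plan is to deduce everything from the finite generation in Theorem \ref{main}. Set $\E = \E(N) = \bigoplus_{i,n} \Ext^i_A(M,N_n)$, a finitely generated bi-graded module over $\Sc = \R[t_1,\ldots,t_c]$, where the $t_\ell$ have bidegree $(2,0)$ (i.e.\ degree $2$ in the cohomological index, $0$ in the Rees grading) and $\R_1$ shifts the Rees grading by $1$. First I would recall the standard fact that for any finitely generated module $U$ over a Noetherian ring $R$, $\Ass_R U$ is finite; applying this to $\E$ viewed as a module over (a Noetherian presentation of) $\Sc$, and using that $\Ass_A(\Ext^i_A(M,N_n))$ is contained in the set of $A$-components of associated primes of $\E$, gives the finiteness assertion $\bigcup_{n,i}\Ass_A \Ext^i_A(M,N_n)$ finite. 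The only mild point to check is that an associated prime of a graded piece $\E_{(i,n)}$ over $A$ contracts from an associated prime of the whole bi-graded module over $\Sc$; this is routine since $A \hookrightarrow \Sc$ and $\E_{(i,n)}$ is an $A$-summand of $\E$.

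For the stability statement I would split $\E$ according to parity of the cohomological degree. Because the Eisenbud operators $t_\ell$ have cohomological degree $2$, the submodule $\Sc_+$-action does not mix parities in a way that matters here; more precisely, consider the two $\Sc$-submodules
\[
\E^{\mathrm{even}} = \bigoplus_{i,n}\Ext^{2i}_A(M,N_n), \qquad \E^{\mathrm{odd}} = \bigoplus_{i,n}\Ext^{2i+1}_A(M,N_n),
\]
each of which is finitely generated over $\Sc$ (they are direct summands of $\E$ as modules over the subring generated by the $t_\ell^{\,}$ together with $\R$; alternatively reindex so the cohomological grading becomes a $\Z$-grading by halves). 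Now I would invoke the general principle: if $U = \bigoplus_{p,q} U_{p,q}$ is a finitely generated bi-graded module over a standard-ish bi-graded Noetherian algebra, then $\Ass$ of the graded components stabilizes in each grading direction for large indices. Concretely, over $\R$ the module $\bigoplus_n (\text{fixed }i\text{-row})$ is finitely generated, so by Brodmann-type arguments $\Ass_A \Ext^i_A(M,N_n)$ is eventually independent of $n$; and over $T = A[t_1,\ldots,t_c]$ the module $\bigoplus_i \Ext^i_A(M,N_n)$ is finitely generated (Gulliksen), so $\Ass_A \Ext^{2i}_A(M,N_n)$ is eventually independent of $i$ for fixed $n$. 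Combining the two directions — using the $\Sc$-module structure to pass a single stable value across both indices simultaneously — yields $i_0, n_0$ with the stated equalities.

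The main obstacle I expect is making the "stabilizes in both directions at once" step precise: showing that there is a single threshold $(i_0,n_0)$ beyond which $\Ass_A \Ext^{2i}_A(M,N_n)$ is constant, rather than merely constant in each variable with the other fixed. The clean way is to pick a prime $\fP \in \Spec A$ and ask for which $(i,n)$ it lies in $\Ass_A \Ext^i_A(M,N_n)$; this is detected by $\Ext^i_{A_\fP}(M_\fP, (N_n)_\fP)$ having depth $0$, equivalently by a certain bi-graded component of a $\Hom$ into an injective hull being nonzero, which is a finitely generated bi-graded object, hence its support in the $(i,n)$-lattice is eventually a union of translated sub-cones — and for the parity-fixed, Rees-direction-periodic pattern this forces the desired rectangular stability. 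Once that combinatorial statement about supports of finitely generated bi-graded modules over $\Sc$ is isolated and proved (it is essentially the observation that $\Ass$ is constant on the complement of a finite union of coordinate-type shifts), the theorem follows; I would isolate it as a lemma and then the proof of Theorem \ref{motiv} is a short application to $\E^{\mathrm{even}}$ and $\E^{\mathrm{odd}}$.
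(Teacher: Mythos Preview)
Your finiteness argument is essentially the paper's: apply Theorem~\ref{main} to get $\E(N)$ finitely generated over the Noetherian ring $\Sc$, then contract associated primes along $A\hookrightarrow\Sc$ (this is exactly Fact~\ref{P-finite} in the paper). So that half is fine.

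For the stability half, you correctly identify the obstacle --- stability in each variable separately does not automatically give a single joint threshold --- but the fix you sketch is both vaguer and heavier than needed, and one step is doubtful: the ``$\Hom$ into an injective hull'' you want to use to detect $\fP\in\Ass$ is \emph{not} in general a finitely generated bi-graded $\Sc$-module, so the ``support is eventually a union of translated sub-cones'' conclusion is not justified as stated. The paper avoids this entirely with a short lemma (Lemma~\ref{P-stable}): for a finitely generated bigraded module $D$ over $R=A[x_1,\ldots,x_r;y_1,\ldots,y_s]$ with $\deg x_i=(1,0)$, $\deg y_j=(0,2)$, pass to $E=D/H^0_{R_{++}}(D)$; since $\grade(R_{++},E)>0$, the map $d\mapsto (x_iy_j\cdot d)_{i,j}$ is an injection $E\hookrightarrow E(1,2)^l$. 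Because $E$ and $D$ agree in high bidegrees, this yields injections
\[
\Ext^i_A(M,N_n)\hookrightarrow \bigl(\Ext^{i+2}_A(M,N_{n+1})\bigr)^l \quad \text{for all } i,n\gg 0,
\]
hence $\Ass_A\Ext^i_A(M,N_n)\subseteq \Ass_A\Ext^{i+2}_A(M,N_{n+1})$. This is an ascending chain of subsets of the finite set from part one, so it stabilizes, and the parity separation falls out automatically from the $+2$ shift. No separate splitting into $\E^{\mathrm{even}}/\E^{\mathrm{odd}}$, no prime-by-prime analysis, and no combinatorics of cones is required. (Alternatively, the paper's first proof simply invokes West's multigraded associated-primes result~\ref{west} after Theorem~\ref{main}.)
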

The following example   shows  that two set of stable values of associate primes can occur
\begin{example}
Let $ Q = k[[u,x]]$, $A = Q/(ux)$.
Let $M = Q/(u)$,   $I = A$ and $N = M[t]$ (so $N_n = M$ for all $n$).

For $i \geq 1$ one has
\[
\Ext^{2i-1}_{A}(M,M) = 0 \quad \Ext^{2i}_{A}(M,M) = k.
\]
\end{example}

  \s \label{west} We now state special case of a result due to E. West \cite[3.2;5.1]{West}.

 Let $R = A[x_1,\ldots, x_r ;y_1,\ldots y_s] $ be a  bi-graded $A$-algebra with
 $\deg x_i = (2,0)$ and $\deg y_j = (0,1)$. Let
  $M = \bigoplus_{i, n \geq 0}M_{(i,n)}$ be a finitely generated $R$-module. Then

  \begin{enumerate}
    \item $\bigcup_{i\geq 0}\bigcup_{n\geq 0} \Ass_A M_{(i,n)} $    is a finite set.
    \item  $ \exists \ i_0,  n_0$ such that for all $ i \geq i_0$
and $n \geq n_0$ we have
    \begin{align*}
    \Ass_A M_{(2i,n)}  &= \Ass_A M_{(2i_0,n_0)}   \\
    \Ass_A M_{(2i+1,n)}  &= \Ass_A M_{(2i_0 +1,n_0)}
    \end{align*}
  \end{enumerate}

 \begin{proof}[First proof of Theorem \ref{motiv}]
  The result follows from our main Theorem \ref{main} and \ref{west}.
  \end{proof}

For the convenience of the readers we give a self-contained second proof of Theorem \ref{motiv}.
We
need the following exercise problem from Matsumara's text (6.7, page 42) \cite{Mat}.

\begin{fact}\label{P-finite}
Let $f \colon A \rt B$ be a ring homomorphism of Noetherian rings. Let $U$ be a finitely generated $B$-module. Then
\[
\Ass_A U = \left \{ \fP\cap A \mid \fP \in \Ass_B U \right\}
\]
In particular $\Ass_A U$ is a finite set.
\end{fact}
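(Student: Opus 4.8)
The plan is to establish the displayed equality by proving the two inclusions separately, and then to read off finiteness from the fact that $B$ is Noetherian. Throughout I identify $A$ with a subring of $B$ via $f$, so that for a prime $\fP$ of $B$ the symbol $\fP \cap A$ means $f^{-1}(\fP)$.

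First I would dispose of the easy inclusion $\{\fP \cap A \mid \fP \in \Ass_B U\} \subseteq \Ass_A U$. Given $\fP \in \Ass_B U$, write $\fP = \ann_B(x)$ with $x \in U$. An element $a \in A$ kills $x$ exactly when $f(a) \in \ann_B(x) = \fP$, so $\ann_A(x) = f^{-1}(\fP) = \fP \cap A$. Since the contraction of a prime ideal is prime, $\fP \cap A$ is a prime of $A$ realized as the annihilator of an element of $U$, hence $\fP \cap A \in \Ass_A U$. This step is routine.

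The main work is the reverse inclusion $\Ass_A U \subseteq \{\fP \cap A \mid \fP \in \Ass_B U\}$. Fix $\fp \in \Ass_A U$ and choose $x \in U$ with $\ann_A(x) = \fp$; note $x \neq 0$ since $\fp$ is proper. Pass to the cyclic $B$-submodule $V = Bx \subseteq U$. On one hand, $f(\fp)$ annihilates $x$, hence annihilates all of $V$, so $\ann_B(V) \supseteq f(\fp)$; consequently every $\fP \in \Ass_B(V)$ contains $\ann_B(V)$ and therefore satisfies $\fP \cap A \supseteq \fp$. On the other hand, I would localize at the multiplicative set $S = A \setminus \fp$: the $A$-submodule $Ax \cong A/\fp$ of $V$ localizes to the nonzero field $(A/\fp)_\fp$, and since localization is exact this embeds into $S^{-1}V$, so $S^{-1}V \neq 0$. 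As $S^{-1}V$ is a finitely generated module over the Noetherian ring $S^{-1}B$, it has a nonempty set of associated primes; any such prime has the form $S^{-1}\fP$ for some $\fP \in \Ass_B(V)$ with $\fP \cap f(S) = \emptyset$, i.e. with $\fP \cap A \subseteq \fp$. Combining the two containments yields a prime $\fP$ with $\fP \cap A = \fp$, and since $V \subseteq U$ we have $\fP \in \Ass_B(V) \subseteq \Ass_B(U)$, as required. Finally, $\Ass_B U$ is finite because $B$ is Noetherian and $U$ is finitely generated, so the displayed equality presents $\Ass_A U$ as the image of a finite set under contraction, hence finite.

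The ``hard part'' — more a matter of careful bookkeeping than of genuine depth — is the reverse inclusion: one must manufacture a single prime of $B$ contracting \emph{exactly} to $\fp$, which really does require both halves (the annihilator observation forces one containment, the localization forces the other), and one should be careful that the standard compatibility of $\Ass$ with localization is being invoked over the ring $B$, with the multiplicative set coming from $A$.
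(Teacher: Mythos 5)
Your proof is correct, but there is nothing in the paper to compare it against: the paper does not prove this Fact, it simply cites it as an exercise from Matsumura's text (6.7, page 42) and moves on. Your argument is a complete and valid solution to that exercise. Both inclusions are handled cleanly: the contraction of an annihilator is the annihilator of the image of the corresponding element, which gives the easy direction; and the ``squeeze'' in the hard direction --- forcing $\fP \cap A \supseteq \fp$ via $f(\fp) \subseteq \ann_B(Bx)$, and $\fP \cap A \subseteq \fp$ by localizing at $S = A \setminus \fp$ and using that $\Ass$ commutes with localization over the Noetherian ring $S^{-1}B$ --- is exactly the right mechanism. The one place worth being fully explicit is the nonvanishing of $S^{-1}V$: you deduce it from the injection $A/\fp \cong Ax \hookrightarrow V$, which is correct since $\ann_A(x) = \fp$ and localization is exact, so $(A/\fp)_\fp \neq 0$ embeds in $S^{-1}V$. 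The finiteness conclusion follows as you say from $\Ass_B U$ being finite. No gaps.
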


  We will also need the following
\begin{lemma}\label{P-stable}
Let $R = A[x_1,\ldots, x_r ;y_1,\ldots y_s] $ be a  bi-graded $A$-algebra with
 $\deg x_i = (1,0)$ and $\deg y_j = (0,2)$. Let
  $D = \bigoplus_{i, j \geq 0}D_{(i,j)}$ be a finitely generated $R$-module. Set $l = r\bullet s$. Then there exists $i_0, j_0$ such that
  there exists inclusions
  \[
D_{(i,j)} \hookrightarrow D^l_{(i+1,j+2)} \quad \text{for all} \ \ i \geq i_0 \ \& \ j \geq j_0
 \]
 \end{lemma}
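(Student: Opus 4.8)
The statement is a purely combinatorial fact about finitely generated bi-graded modules over $R = A[x_1,\dots,x_r;y_1,\dots,y_s]$ with $\deg x_i = (1,0)$, $\deg y_j = (0,2)$. My strategy is to first prove the one-variable analogue in each direction separately, then combine. Fix a finite homogeneous generating set of $D$, and let $(a,b)$ be a bound so that every generator has first coordinate $\le a$ and second coordinate $\le b$. For a fixed $j$, the $A$-module $D_{(\bullet,j)} = \bigoplus_i D_{(i,j)}$ is a finitely generated module over $A[x_1,\dots,x_r]$ (standard graded in the $x$'s), and likewise $D_{(i,\bullet)}$ is finitely generated over $A[y_1,\dots,y_s]$. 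So the core of the argument is: for a finitely generated standard-graded module $G = \bigoplus_i G_i$ over $A[x_1,\dots,x_r]$, once $i$ is large enough (beyond the top degree of a generating set), multiplication-by-$x_k$ maps $G_i \to G_{i+1}$, ranging over $k = 1,\dots,r$, are jointly "surjective" in the sense that $G_{i+1} = \sum_{k=1}^r x_k G_i$. Dualizing this: the map $G_i \to G_{i+1}^{\,r}$ sending $g \mapsto (x_1 g,\dots,x_r g)$ is injective once $i \ge i_1$, because any $g$ in its kernel is killed by $\mathfrak{m}_+ = (x_1,\dots,x_r)$, hence generates a submodule concentrated in degree $i$; but... wait, that submodule need not vanish. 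So injectivity is genuinely false in general; instead one only gets an inclusion $G_i \hookrightarrow G_{i+1}^{\,r}$ after passing to a bounded range, using that the $x_k$ generate.

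**Reconsidering — the right formulation.** Actually the correct one-variable lemma is: there is $i_1$ such that for all $i \ge i_1$ there is an inclusion $G_i \hookrightarrow G_{i+1}^{\,r}$ (and symmetrically, with $s$ copies, in the $y$-direction with the degree shift $2$). The way to see this: choose $i_1$ so that $G$ is generated in degrees $\le i_1$; then for $i \ge i_1$ the multiplication map $\mu: (G_i)^{\,r} \twoheadrightarrow G_{i+1}$, $(g_1,\dots,g_r)\mapsto \sum x_k g_k$, is surjective. This does not immediately give an injection the other way. The honest route is to instead produce the inclusion directly: since $\bigcap_k (0:_{G_i} x_k)$ is a submodule annihilated by $\mathfrak m_+$, it is a quotient of a summand of $G_{i_1}\otimes(\text{something finite})$... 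This is getting delicate, so the main obstacle I anticipate is precisely pinning down why an \emph{injection} $G_i \hookrightarrow G_{i+1}^{\,r}$ exists rather than merely a surjection in the opposite direction. I believe the intended argument is: pass to an associated graded / Artinian-Rees type statement, or more simply, use that for $i \gg 0$ the module $G_i$ embeds into $\bigoplus_k x_k G_i \subseteq G_{i+1}$ diagonally — no. The cleanest fix: for $i \gg 0$, $G_i$ is a quotient of $G_{i-1}^{\,r}$, so $\mu(G_i) := \{(x_1 g, \dots, x_r g): g \in G_i\}$ has the property that its projection to $G_{i+1}$ via any coordinate composed with summing recovers $x_k g$; if all $x_k g = 0$ then $g \in \mathfrak{m}_+$-torsion in top degree. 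For $i$ beyond the (finitely many) degrees of the finitely generated submodule $H^0_{\mathfrak m_+}(G)$, this torsion vanishes, giving the injection. So the precise choice of $i_1$ is: larger than the top degree of $H^0_{\mathfrak m_+}(G)$.

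**Assembling the two directions.** Granting the one-variable statements, I would argue: choose $i_0$ larger than the top $x$-degree of $H^0_{(x)}$-torsion across all the (finitely many relevant, or uniformly bounded) slices, and $j_0$ similarly for the $y$-direction. Then for $i \ge i_0$ we get $D_{(i,j)} \hookrightarrow D_{(i+1,j)}^{\,r}$, and for $j \ge j_0$ we get $D_{(i,j)} \hookrightarrow D_{(i,j+2)}^{\,s}$. Composing: for $i \ge i_0$ and $j \ge j_0$,
\[
D_{(i,j)} \hookrightarrow D_{(i+1,j)}^{\,r} \hookrightarrow \left(D_{(i+1,j+2)}^{\,s}\right)^{r} = D_{(i+1,j+2)}^{\,rs} = D_{(i+1,j+2)}^{\,l},
\]
where the second arrow applies the $y$-direction inclusion coordinatewise (legitimate since $i+1 \ge i_0$ means... no — we need the $y$-inclusion at bidegree $(i+1,j)$, which needs $j \ge j_0$, fine). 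The uniformity of $i_0, j_0$ across all the slices is guaranteed because $D$ is Noetherian: $H^0_{(x_1,\dots,x_r)}(D)$ and $H^0_{(y_1,\dots,y_s)}(D)$ are finitely generated bi-graded modules, hence bounded in the respective degrees. That uniformity — a single $i_0$ working for every $j$, and a single $j_0$ for every $i$ — is the second place where care is needed, but it falls out of finite generation of the relevant local cohomology modules. The routine bookkeeping of bidegrees in the composition is the only remaining labor.
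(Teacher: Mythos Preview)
Your approach is correct but differs from the paper's. You separate the two directions: after noting that $H^0_{(x_1,\dots,x_r)}(D)$ is a finitely generated bigraded $R$-module and is bounded in $x$-degree (since each generator is killed by a fixed power of $(x_1,\dots,x_r)$), you obtain for $i\ge i_0$ and all $j$ an injection $D_{(i,j)}\hookrightarrow D_{(i+1,j)}^{\,r}$ via $g\mapsto (x_1g,\dots,x_rg)$; symmetrically you get $D_{(i,j)}\hookrightarrow D_{(i,j+2)}^{\,s}$ for $j\ge j_0$, and then you compose. The paper instead works with the single diagonal irrelevant ideal $R_{++}=(x_iy_j:1\le i\le r,\ 1\le j\le s)$: it passes to $E=D/H^0_{R_{++}}(D)$, observes that $E_{(i,j)}=D_{(i,j)}$ for $i,j\gg 0$, and then notes that the map $E\to E(1,2)^{\,l}$ given by $d\mapsto (x_iy_jd)_{i,j}$ is injective because $\grade(R_{++},E)>0$. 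The paper's route is shorter --- one local-cohomology module, one map, and the count $l=rs$ appears intrinsically as the number of generators of $R_{++}$ --- whereas your route makes the two degree shifts visible separately and requires the extra (easy) check that boundedness of $H^0_{(x)}(D)$ in $x$-degree is uniform in $j$. Both arguments hinge on the same idea: kill the torsion with respect to the relevant irrelevant ideal, then the ``multiplication by all generators'' map is injective.
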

 \begin{proof}
Let $R_{++} = \bigoplus_{i\geq 1, j\geq 1}R_(i,j)$ be the irrelevant ideal of $R$. Set $$E = D/H^0_{R_{++}}(D).$$
It is easy to check that $D_{(i,j)} = E_{(i,j)} $ for all $i,j \gg 0$. So if $E = 0$ then we have nothing to prove.

We consider the case when $E \neq 0$. Notice that
\[
H^0_{R_{++}}\left(  E   \right) = 0. \quad \text{So} \ \grade(R_{++}, E) > 0.
\]
Now $R_{++} $ is generated by the $l$ elements $x_iy_j$; where $1\leq i \leq r$ and $1 \leq j \leq s$. Notice also that
$\deg x_iy_j = (1,2)$ for all $i,j$. Consider the map
\begin{align*}
\phi \colon E &\mapsto \left( E(1,2) \right)^l \\
d &\mapsto x_iy_j d.
\end{align*}
Since $\grade(R_{++}, E) > 0$ we get that $\phi$ is injective. Since $E_{(i,j)} = D_{(i,j)}$ for all $i,j \gg 0$; the result follows.
 \end{proof}

 \begin{proof}[Second proof of Theorem \ref{motiv}]
 Let $\Sc = \R[t_1,\ldots, t_c]$. Then $D = E(N)$ is a finitely generated $\Sc$-module.

 1. By \ref{P-finite}, it follows that $\Ass_A E(N)$ is a finite set. Notice that
 \[
 \Ass_A \E(N) = \bigcup_{n\geq 0}\bigcup_{i\geq 0} \Ass \ \Ext^{i}_{A}(M,N_n).
 \]

 2. By \ref{P-stable} it follows that we have inclusions
 \[
 \Ext^{i}_{A}(M,N_n) \hookrightarrow \Ext^{i+2}_{A}(M,N_{n+1}) \quad \text{for all} \ i,n \gg 0.
 \]
 It follows that
\[
 \Ass_A \Ext^{i}_{A}(M,N_n) \subseteq \Ass_A \Ext^{i+2}_{A}(M,N_{n+1}) \quad \text{for all} \ i,n \gg 0.
 \]
 Using 1. we get the result.
 \end{proof}

To prove an analog of Theorem \ref{motiv} for a local complete intersection we need the following result.
\begin{lemma}\label{AtoA}
Let $(A,\m)$ be a Noetherian local ring. Let $\wh A$ be the completion of $A$ \wrt \ $\m$. Let $B $ be a finitely generated $\wh A$-algebra containing $\wh A$. Let
$E$ be an $A$-module such that $E\otimes_A \wh A$ is a finitely generated $B$-module. Let $D$ be any finitely generated $ A$-module. Then
\begin{enumerate} [\rm (a)]
\item
$\displaystyle{  \Ass_{\wh A} E\otimes_A \wh A  \quad \text{is a finite set.  }       }$
\item
$\displaystyle{  \Ass_{ A} E\otimes_A \wh A  \quad \text{is a finite set.  }       }$
\item
$ \Ass_A E = \Ass_A E\otimes_A \wh A$. In particular $\Ass_A E$ is a finite set.
\item
$ \Ass_A D = \Ass_A D\otimes_A \wh A$.
\end{enumerate}
\end{lemma}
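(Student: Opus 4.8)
The plan is to read off parts (a) and (b) from Fact~\ref{P-finite}, to obtain (d) as the trivial instance $B=\wh A$, $E=D$ of (c), and to concentrate the real work on (c) — specifically on the inclusion $\Ass_A(E\otimes_A\wh A)\subseteq\Ass_A E$.

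For (a) and (b): since $B$ is a finitely generated algebra over the Noetherian ring $\wh A$, it is itself Noetherian, and $E\otimes_A\wh A$ is a finitely generated $B$-module. Applying Fact~\ref{P-finite} to the ring map $\wh A\to B$ gives (a). Applying it to the composite $A\to\wh A\to B$ gives that $\Ass_A(E\otimes_A\wh A)=\{\,\fP\cap A\mid\fP\in\Ass_B(E\otimes_A\wh A)\,\}$ is finite, which is (b). I will also record the following refinement for use in (c): if $\fP\in\Ass_B(E\otimes_A\wh A)$ and $P:=\fP\cap\wh A$, then the inclusions of $\wh A$-modules $\wh A/P\hookrightarrow B/\fP\hookrightarrow E\otimes_A\wh A$ show $P\in\Ass_{\wh A}(E\otimes_A\wh A)$; hence every $\fp\in\Ass_A(E\otimes_A\wh A)$ has the form $\fp=P\cap A$ for some $P\in\Ass_{\wh A}(E\otimes_A\wh A)$.

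For the easy inclusion in (c): if $\fp\in\Ass_A E$ then $A/\fp\hookrightarrow E$, and tensoring with the flat $A$-algebra $\wh A$ yields $\wh A/\fp\wh A\hookrightarrow E\otimes_A\wh A$. Since $A/\fp$ is Noetherian local, Krull's intersection theorem gives $A/\fp\hookrightarrow\wh{A/\fp}=\wh A/\fp\wh A$; composing, $A/\fp\hookrightarrow E\otimes_A\wh A$ as $A$-modules, so $\fp\in\Ass_A(E\otimes_A\wh A)$. For the hard inclusion, let $\fp\in\Ass_A(E\otimes_A\wh A)$ and, using the refinement above, choose $P\in\Ass_{\wh A}(E\otimes_A\wh A)$ with $P\cap A=\fp$. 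Localising, $A_\fp\to\wh A_P$ is a flat local homomorphism of Noetherian local rings, hence faithfully flat, and $P\wh A_P\in\Ass_{\wh A_P}\bigl(E_\fp\otimes_{A_\fp}\wh A_P\bigr)$. Pick $0\neq\xi$ in $E_\fp\otimes_{A_\fp}\wh A_P$ annihilated by $P\wh A_P$, and hence by the maximal ideal $\fp A_\fp$. Because $A_\fp$ is Noetherian, $(0:_{E_\fp}\fp A_\fp)=\Hom_{A_\fp}(A_\fp/\fp A_\fp,E_\fp)$ has a finitely presented first argument, so flat base change identifies $(0:_{E_\fp}\fp A_\fp)\otimes_{A_\fp}\wh A_P$ with $(0:_{E_\fp\otimes_{A_\fp}\wh A_P}\fp A_\fp\wh A_P)$, which contains $\xi\neq0$; faithful flatness then forces $(0:_{E_\fp}\fp A_\fp)\neq0$. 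Thus some nonzero $e\in E_\fp$ satisfies $\ann_{A_\fp}(e)=\fp A_\fp$, i.e. $\fp A_\fp\in\Ass_{A_\fp}E_\fp$, equivalently $\fp\in\Ass_A E$. This proves $\Ass_A E=\Ass_A(E\otimes_A\wh A)$, and finiteness of $\Ass_A E$ is then immediate from (b).

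Finally, (d) is the special case of (c) in which one takes $B=\wh A$ (which trivially contains $\wh A$) and $E=D$: a finitely generated $A$-module $D$ has $D\otimes_A\wh A$ finitely generated over $\wh A=B$, so (c) applies and yields $\Ass_A D=\Ass_A(D\otimes_A\wh A)$. The main obstacle is precisely the hard inclusion in (c): since neither $E$ nor $E\otimes_A\wh A$ need be finitely generated over $A$ or $\wh A$, one cannot invoke the usual Noetherian comparison of associated primes under faithfully flat extension directly; the device is to route through $B$ (via Fact~\ref{P-finite}) to produce a genuine associated prime of $\wh A$, then localise at its contraction and play off the equivalence $(0:_E\fp)\neq0\iff\fp\in\Ass_A E$ against faithful flatness of $A_\fp\to\wh A_P$.
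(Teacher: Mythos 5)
Your proposal is correct, and parts (a), (b), (d) match the paper's treatment. Where you diverge is in (c). The paper cites Matsumura's Theorem 23.3 on associated primes under flat base change (restated in the paper as Theorem~\ref{23}): it first uses Fact~\ref{P-finite} along the chain $A\to\wh A\to B$ to conclude $\Ass_A(E\otimes_A\wh A)={}^a\alpha(\Ass_{\wh A}(E\otimes_A\wh A))$, and then applies Theorem~\ref{23}(ii) with $G=\wh A$ to identify $\Ass_{\wh A}(E\otimes_A\wh A)=\bigcup_{\fP\in\Ass_A E}\Ass_{\wh A}(\wh A/\fP\wh A)$, and finally Theorem~\ref{23}(i) to contract this union back to $\Ass_A E$. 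You instead bypass Theorem~\ref{23} and reprove the substance of what it gives here from first principles: you obtain the inclusion $\Ass_A E\subseteq\Ass_A(E\otimes_A\wh A)$ by tensoring $A/\fp\hookrightarrow E$ with the flat algebra $\wh A$ and invoking Krull's intersection theorem for $A/\fp\hookrightarrow\wh A/\fp\wh A$; and you get the converse by first manufacturing a prime $P\in\Ass_{\wh A}(E\otimes_A\wh A)$ with $P\cap A=\fp$ via the chain of injections $\wh A/P\hookrightarrow B/\fP\hookrightarrow E\otimes_A\wh A$ coming from Fact~\ref{P-finite} applied to $B$, then localizing at $\fp$ and $P$, and using flat base change of $\Hom_{A_\fp}(A_\fp/\fp A_\fp,-)$ together with faithful flatness of $A_\fp\to\wh A_P$ to conclude $(0:_{E_\fp}\fp A_\fp)\neq0$. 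Both arguments correctly avoid any assumption that $E$ or $E\otimes_A\wh A$ is finitely generated over $A$ or $\wh A$ (which indeed they are not in the application). The paper's route is shorter by leaning on Matsumura as a black box; your route is more self-contained and makes explicit that the detour through the Noetherian $B$-module structure is what supplies the finitely many witnesses in $\Ass_{\wh A}$ whose contractions one then analyzes locally.
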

To prove this result we need Theorem 23.3 from \cite{Mat}.   Unfortunately  there is a typographical error in the statement of Theorem 23.3 in \cite{Mat}.
So we state it here.
\begin{theorem}\label{23}
Let $\varphi \colon A \rt B$ be a homomorphism of Noetherian rings, and let $E$ be an $A$-module and $G$ a $B$-module. Suppose that
$G$ is flat over $A$; then we have the following:
\begin{enumerate}[\rm (i)]
\item
if $\fp \in \Spec A$ and $G/\fp G \neq 0$ then
\[
 ^a \varphi \left( \Ass_B(G/\fp G)  \right) = \Ass_A (G/\fp G) = \{\fp \}.
\]
\item
$\displaystyle{\Ass_B(E\otimes G) = \bigcup_{\fp \in \Ass_A(E) } \Ass_B(G/\fp G).}$
\end{enumerate}
\end{theorem}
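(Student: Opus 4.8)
The plan is to prove part~(i) from the elementary fact that a flat module over a domain is torsion-free, and part~(ii) by separating the trivial inclusion from the substantial one, the latter handled by first cutting $E$ down to a finitely generated submodule and then localizing. For part~(i): $G/\fp G = G\otimes_A A/\fp$ is flat over the domain $A/\fp$, hence torsion-free over $A/\fp$. If $\mathfrak q=\ann_A(x)$ for a nonzero $x\in G/\fp G$, then $\fp\subseteq\mathfrak q$ (since $\fp$ annihilates $G/\fp G$) and the image of $\mathfrak q$ in $A/\fp$ is $\ann_{A/\fp}(x)=0$, so $\mathfrak q=\fp$; as $A$ is Noetherian and $G/\fp G\neq 0$, the set $\Ass_A(G/\fp G)$ is nonempty, hence equals $\{\fp\}$. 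If $\fP=\ann_B(y)\in\Ass_B(G/\fp G)$, then $\fP\cap A=\varphi^{-1}(\fP)=\ann_A(y)$ is a prime of $A$ annihilating the nonzero element $y$ of the $A$-module $G/\fp G$, so it lies in $\Ass_A(G/\fp G)=\{\fp\}$; thus $^a\varphi$ carries $\Ass_B(G/\fp G)$ into $\{\fp\}$, and onto it since $\Ass_B(G/\fp G)\neq\emptyset$.

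For part~(ii), the inclusion $\supseteq$ is easy: if $\fp\in\Ass_A E$ then $A/\fp\hookrightarrow E$, and tensoring with the flat module $G$ gives $G/\fp G\hookrightarrow E\otimes_A G$, so $\Ass_B(G/\fp G)\subseteq\Ass_B(E\otimes_A G)$; now take the union over $\fp\in\Ass_A E$. For $\subseteq$, take $\fP\in\Ass_B(E\otimes_A G)$ and set $\fp=\fP\cap A$. Write $\fP=\ann_B(z)$ with $z=\sum_{i=1}^n e_i\otimes g_i$ and put $E'=\sum_i Ae_i$; flatness of $G$ makes $E'\otimes_A G\hookrightarrow E\otimes_A G$, so $z$ is the image of some $z'\in E'\otimes_A G$ with $\ann_B(z')=\fP$, i.e.\ $\fP\in\Ass_B(E'\otimes_A G)$, and since $E'\subseteq E$ we have $\Ass_A E'\subseteq\Ass_A E$; hence it suffices to treat the finitely generated module $E'$. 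Next, all the modules involved and the formation of $\Ass$ over the Noetherian rings $A,B$ commute with localization at $\varphi(A\setminus\fp)$, and $\fP$ survives this localization because $\fP\cap A=\fp$; so I may assume $A$ is local with maximal ideal $\fp$ and $E$ is a nonzero finitely generated $A$-module.

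Now $\fp B\subseteq\fP$, so $\fp z=0$, i.e.\ $z\in(E\otimes_A G)[\fp]:=\{w:\fp w=0\}$. Writing $\fp=(a_1,\dots,a_k)$ and tensoring the left-exact sequence $0\to E[\fp]\to E\to E^k$ (last map $e\mapsto(a_ie)$) with the flat module $G$ identifies $E[\fp]\otimes_A G$ with $(E\otimes_A G)[\fp]$; and since $\kappa:=A/\fp$ is now a field, $E[\fp]$ is a finite-dimensional $\kappa$-vector space, so $(E\otimes_A G)[\fp]\cong E[\fp]\otimes_\kappa(G/\fp G)$. As $z\neq 0$ this forces $E[\fp]\neq 0$, hence $\fp\in\Ass_A E$ (a nonzero element of a finitely generated module over the local ring $A$ that is killed by the maximal ideal has annihilator exactly $\fp$). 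Under the isomorphism $(E\otimes_A G)[\fp]\cong(G/\fp G)^r$ coming from a $\kappa$-basis of $E[\fp]$ (with $r\geq 1$), $z$ corresponds to a tuple $(z_1,\dots,z_r)$ with $\bigcap_i\ann_B(z_i)=\fP$, so $\prod_i\ann_B(z_i)\subseteq\fP$ and, $\fP$ being prime, $\ann_B(z_i)=\fP$ for some $i$ with $z_i\neq 0$; thus $\fP\in\Ass_B(G/\fp G)$. Undoing the localization (again by the localization formula for $\Ass$) returns $\fp\in\Ass_A E$ and $\fP\in\Ass_B(G/\fp G)$ for the original data, completing $\subseteq$.

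The main obstacle is the inclusion $\subseteq$ of part~(ii); the two points that make it work are: using flatness of $G$ to push the annihilator relation $\fP=\ann_B(z)$ down to a finitely generated submodule of $E$, and the compatibility of $\fp$-torsion (equivalently of $\Hom_A(A/\fp,-)$) with the flat base change $-\otimes_A G$, which is exactly what lets one read off both $\fp\in\Ass_A E$ and $\fP\in\Ass_B(G/\fp G)$ from the single element $z$. Part~(i), the inclusion $\supseteq$, and the localization bookkeeping are routine.
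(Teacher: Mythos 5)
Your proof is correct. Note that the paper contains no proof of this statement to compare against: it is Theorem 23.3 of Matsumura, restated only to correct a typographical error in the source. Your argument is essentially the standard textbook one for this result: part (i) via torsion-freeness of the flat $A/\fp$-module $G/\fp G$, the inclusion $\supseteq$ in (ii) by tensoring $A/\fp \hookrightarrow E$ with the flat module $G$, and the inclusion $\subseteq$ by cutting down to a finitely generated submodule of $E$, localizing at $\fp = \fP \cap A$, and using the compatibility of $\fp$-torsion with the flat base change $-\otimes_A G$ to read off both $\fp \in \Ass_A E$ and $\fP \in \Ass_B(G/\fp G)$; the individual steps (injectivity of $E'\otimes_A G \to E\otimes_A G$, nonemptiness of $\Ass$ over Noetherian rings, the argument that $\fP$ prime and $\fP=\bigcap_i \ann_B(z_i)$ force $\fP=\ann_B(z_i)$ for some $i$, and the localization bookkeeping) all check out.
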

\begin{remark}
In  \cite{Mat} $\Ass_A(E\otimes G)$ is typed instead of $\Ass_B(E\otimes G)$.
\end{remark}
\begin{proof}[Proof of Theorem \ref{AtoA}]
We consider the natural ring  homomorphisms
\begin{align*}
\alpha &\colon A \hookrightarrow \wh A \\
\beta &\colon \wh A \hookrightarrow B \\
\gamma &\colon A \hookrightarrow  B
\end{align*}
Clearly $\gamma = \beta \circ \alpha$.

(a) We use the map $\beta$ and fact \ref{P-finite} to get our result.

(b) We use the map $\gamma$ and fact \ref{P-finite} to get our result.

(c) On the spectrum of the rings we have: $$ ^a \gamma = \  ^a\alpha \ \circ \ ^a\beta.$$
Therefore using  fact \ref{P-finite} we get
\begin{align*}
\Ass_A E \otimes \wh A &= \  ^a \alpha( \Ass_{\wh{ A}} E\otimes \wh A) \\
                       &= \{ P \cap A \mid P \in \Ass_{\wh{ A}} E\otimes \wh A  \}.
\end{align*}

   We consider the flat extension $ \alpha: A \rt  \wh A$.
By Theorem \ref{23}.(ii)  (with $G = \wh A$) we have
\begin{equation*}
\Ass_{\wh{A}} \left( E\otimes_A \wh A  \right) =  \bigcup_{\fP \in \Ass_A E} \Ass_{\wh A}  \wh A/\fP \wh A. \tag{*}
\end{equation*}
Now $\wh A$ is faithfully flat. So if $\fP \in \Spec A$ then $\wh A/ \fP \wh A \neq 0$. By  Theorem \ref{23}.(i) we get
\[
 ^a \alpha \left( \Ass_{\wh A}  \frac{\wh A} { \fP \wh A}\right)  = \fP
\]
Taking $ ^a\alpha$ on (*) yields the desired result.

(d) Set $B = \wh A$. The result follows from (c).
\end{proof}

We now prove the following:
\begin{theorem}\label{motiv-ci}
Let $(A,\m)$ be a local complete intersection.
Let $M$ be a finitely generated $A$-module.
Let $I$ an ideal in $A$ and let $N = \bigoplus_{n\geq 0}N_n$ be a finitely generated $\R$-module.  Then
\[
 \bigcup_{n\geq 0}\bigcup_{i\geq 0} \Ass \ \Ext^{i}_{A}(M,N_n)  \quad \text{is a finite set}.
\]
Furthermore there exists $i_0, n_0$ such that for all $ i \geq i_0$
and $n \geq n_0$ we have
\begin{align*}
 \Ass \  \Ext^{2i}_{A}(M,N_n) &= \Ass \  \Ext^{2i_0}_{A}(M,N_{n_0})   \\
\Ass \  \Ext^{2i+1}_{A}(M,N_n) &= \Ass \  \Ext^{2i_0 +1 }_{A}(M,N_{n_0})
\end{align*}
\end{theorem}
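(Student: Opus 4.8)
The plan is to reduce the statement for an arbitrary local complete intersection $(A,\m)$ to the complete case, where Theorem \ref{main} applies directly. Write $\wh A$ for the $\m$-adic completion. Since $A$ is a local complete intersection, $\wh A \cong Q/(\bF)$ where $Q$ is a complete regular local ring and $\bF = f_1,\ldots,f_c$ is a regular sequence; in particular every finitely generated $\wh A$-module has finite projective dimension over $Q$, so the hypotheses of Theorem \ref{main} are met with $Q$ in that role and $\wh M = M\otimes_A \wh A$ in the role of $M$. First I would set $\wh N = N\otimes_A \wh A$; since completion is flat and $\wh A$ is module-finite over $A$ (well, $\wh A$ itself need not be module-finite, but $\wh A$ is faithfully flat), $\wh N = \bigoplus_{n\geq 0}\wh{N_n}$ is a finitely generated module over $\mathcal{R}(I\wh A) \cong \R\otimes_A \wh A$.

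The next step is the base-change identity $\Ext^i_A(M,N_n)\otimes_A \wh A \cong \Ext^i_{\wh A}(\wh M, \wh{N_n})$, which holds because $M$ is finitely generated, $A$ is Noetherian, and $\wh A$ is flat over $A$. Summing over $i$ and $n$ this gives $\E(N)\otimes_A \wh A \cong \widehat{\E}(\wh N)$, where $\widehat{\E}(\wh N) = \bigoplus_{i,n}\Ext^i_{\wh A}(\wh M,\wh{N_n})$. By Theorem \ref{main} applied over $\wh A = Q/(\bF)$, the module $\widehat{\E}(\wh N)$ is a finitely generated bi-graded module over $\Sc\otimes_A \wh A = \mathcal{R}(I\wh A)[t_1,\ldots,t_c]$, which is in particular a finitely generated bi-graded algebra over $\wh A$.

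Now I would invoke Lemma \ref{AtoA}. Take $E = \E(N)$ (an $A$-module, though not finitely generated over $A$) and $B = \Sc\otimes_A \wh A$, a finitely generated $\wh A$-algebra containing $\wh A$; the hypothesis of Lemma \ref{AtoA} is exactly that $E\otimes_A \wh A$ is a finitely generated $B$-module, which we have just established. Part (c) of that lemma then gives $\Ass_A \E(N) = \Ass_A(\E(N)\otimes_A \wh A)$, a finite set; and since $\Ass_A \E(N) = \bigcup_{n\geq 0}\bigcup_{i\geq 0}\Ass_A \Ext^i_A(M,N_n)$, the first assertion follows. For the "furthermore" part, I would work over $\wh A$: since $\widehat{\E}(\wh N)$ is a finitely generated $\Sc\otimes_A \wh A$-module and $\Sc\otimes_A\wh A$ carries the bi-grading with the $t_i$ in degree $(2,0)$ and the Rees-variables in degree $(0,1)$, Lemma \ref{P-stable} (applied over $\wh A$) yields inclusions $\Ext^i_{\wh A}(\wh M,\wh{N_n})\hookrightarrow \Ext^{i+2}_{\wh A}(\wh M,\wh{N_{n+1}})^l$ for all $i,n\gg 0$, hence an ascending chain $\Ass_{\wh A}\Ext^i_{\wh A}(\wh M,\wh{N_n})\subseteq \Ass_{\wh A}\Ext^{i+2}_{\wh A}(\wh M,\wh{N_{n+1}})$ of finite sets, which stabilizes separately along the even and odd indices; pulling this back to $A$ via Lemma \ref{AtoA}(c) applied to each individual finitely generated $A$-module $\Ext^i_A(M,N_n)$ (using $\Ext^i_A(M,N_n)\otimes_A\wh A \cong \Ext^i_{\wh A}(\wh M,\wh{N_n})$ and $\Ass_A(-) = \Ass_A(-\otimes_A\wh A)$) transfers the stable behaviour to the sets $\Ass_A \Ext^i_A(M,N_n)$.

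The main obstacle I expect is purely bookkeeping: verifying cleanly that $\wh A$ is a quotient of a complete regular local ring by a regular sequence and that $\projdim_Q \wh M < \infty$ for every finitely generated $\wh A$-module (which is where "complete intersection" is genuinely used, via Cohen's structure theorem and the fact that $Q$ is regular), and then being careful that the base-change isomorphism $\Ext^i_A(M,N_n)\otimes_A\wh A\cong\Ext^i_{\wh A}(\wh M,\wh{N_n})$ is natural in $n$ so that it respects the $\R$-module structure and the Eisenbud operators, giving the isomorphism of bi-graded $\Sc\otimes_A\wh A$-modules rather than just a collection of $A$-module isomorphisms. Everything else is a direct application of Theorem \ref{main}, Lemma \ref{AtoA}, and Lemma \ref{P-stable}.
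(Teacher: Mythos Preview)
Your proposal is correct and follows essentially the same route as the paper: complete $A$, apply Theorem~\ref{main} over $\wh A = Q/(\bF)$ to see that $\E(N)\otimes_A\wh A$ is a finitely generated $\mathcal{R}(I\wh A)[t_1,\ldots,t_c]$-module, invoke Lemma~\ref{AtoA}(c) for the finiteness of $\Ass_A\E(N)$, and use Lemma~\ref{P-stable} together with Lemma~\ref{AtoA}(c)/(d) on the individual pieces $\Ext^i_A(M,N_n)$ for the stabilization. The only cosmetic difference is that you make the flat base-change identity $\Ext^i_A(M,N_n)\otimes_A\wh A\cong\Ext^i_{\wh A}(\wh M,\wh{N_n})$ and its compatibility with the bigraded $\Sc$-structure explicit, whereas the paper leaves this implicit.
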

\begin{proof}
We consider the flat extension $\alpha \colon A \rt \wh A$. Say $\wh A = Q/(f_1,\ldots,f_c)$ where $(Q,\n)$ is a regular local ring and
$f_1,\ldots,f_c \in \n^2$ is a regular sequence.
1. Consider $ \E(N) = \bigoplus_{i\geq 0}\bigoplus_{n\geq 0}  \Ext^{i}_{A}(M,N_n)$ as an $A$-module. By Theorem \ref{main};  $E\otimes \wh A$ is a finitely generated $B = \mathcal{R}(I\wh A)[t_1,\ldots t_c]$-algebra. By
 Lemma \ref{AtoA} we get that $\Ass_A E$ is a finite set. Notice
\[
\Ass_A \E(N) = \bigcup_{n\geq 0}\bigcup_{i\geq 0} \Ass_A \ \Ext^{i}_{A}(M,N_n)
\]
2. Set $\E = \E(N)$. By \ref{P-stable} we have injective maps
\begin{equation*}
E_{(i,n)} \otimes \wh A \hookrightarrow \left( E_{(i+1, n+2)}\otimes \wh A \right)^l \quad \text{for all } \ i,n \gg 0. \tag{*}
\end{equation*}
Using Lemma \ref{AtoA}  with $B = \wh{A}$ and $E = E_{(i,n)}$ we get that for each $i\geq 0 $ and $n\geq 0$
\[
\Ass_A E_{(i,n)} = \Ass_A E_{(i,n)}\otimes_A \wh A
\]
The result follows from 1. and (*).
\end{proof}
 \section{Application II\\ Support Varieties}\label{section-app2}
Let $(A,\m)$ be a  local complete intersection of codimension $c$.
Let $M, N$ be two finitely generated $A$-modules. Define
\[
\cx_A(M, N) = \inf\left \lbrace b \in \mathbb{N}  \left\vert \right.   \varlimsup_{n \to \infty} \frac{\mu(\Ext^{n}_{A}(M,N))}{n^{b-1}}  < \infty \right \rbrace
\]
In this section we prove the following theorem

\begin{theorem}\label{comp-stable}
Let $(A,\m)$ be a local complete intersection, $M, N$  two finitely generated $A$-modules and let $I$ be a proper ideal in $A$. Then
\[
\cx_A(M, I^jN)  \quad \text{ is constant for all} \  \ j \gg 0.
 \]
\end{theorem}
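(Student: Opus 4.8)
The plan is to deduce $(\dagger)$ from Theorem \ref{main}, applied to the Rees module of $N$, by reading off the complexities $\cx_A(M,I^jN)$ from the Rees-degree slices of the $k$-fibre of the resulting bigraded Ext module over the ring of cohomology operators. First I would reduce to the complete case. Since $(A,\m)$ is a local complete intersection, $\wh A=Q/(\bF)$ with $(Q,\n)$ regular local and $\bF=f_1,\ldots,f_c$ a regular sequence, so $\projdim_Q(M\otimes_A\wh A)<\infty$. Because $\wh A$ is faithfully flat over $A$, flat base change gives $\Ext^i_A(M,D)\otimes_A\wh A\cong\Ext^i_{\wh A}(M\otimes_A\wh A,\,D\otimes_A\wh A)$ for every finitely generated $A$-module $D$, while $\mu_A(E)=\mu_{\wh A}(E\otimes_A\wh A)$ for every finitely generated $A$-module $E$, and $(I^jN)\otimes_A\wh A\cong(I\wh A)^j(N\otimes_A\wh A)$. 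Hence $\cx_A(M,I^jN)=\cx_{\wh A}(M\otimes_A\wh A,\,(I\wh A)^j(N\otimes_A\wh A))$ for all $j$, so I may assume $A=Q/(\bF)$ with $Q$ regular local and $\projdim_Q M$ finite.

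Next I would form $\mathcal N=\bigoplus_{j\ge0}I^jN$ with its natural $\R$-module structure; it is a \fg\ graded $\R$-module, generated in degree $0$ by $N$. By Theorem \ref{main}, $\E(\mathcal N)=\bigoplus_{i\ge0}\bigoplus_{j\ge0}\Ext^i_A(M,I^jN)$ is a \fg\ bigraded $\Sc=\R[t_1,\ldots,t_c]$-module, where $t_\ell$ has bidegree $(2,0)$ (cohomological degree, then Rees degree). Tensoring with $k=A/\m$ yields a \fg\ bigraded module
\[
\mathcal G\;=\;\E(\mathcal N)\otimes_A k\;=\;\bigoplus_{i,j\ge0}\Ext^i_A(M,I^jN)\otimes_A k
\]
over $R:=\Sc\otimes_A k=F(I)[t_1,\ldots,t_c]$, $F(I)=\R\otimes_A k$ being the fibre cone of $I$. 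For each fixed $j$ the Rees-slice $\mathcal G_j:=\bigoplus_{i\ge0}\Ext^i_A(M,I^jN)\otimes_A k$ is a graded $k[t_1,\ldots,t_c]$-module, finitely generated because $\bigoplus_{i\ge0}\Ext^i_A(M,I^jN)$ is already \fg\ over $A[t_1,\ldots,t_c]$ by \cite{Gulliksen} (as $\projdim_Q M<\infty$); moreover its $k[\underline t]$-structure is exactly the Gulliksen structure on $\Ext^*_A(M,I^jN)\otimes_A k$, since the Eisenbud operators depend only on the fixed resolution of $M$ and act within a single Rees degree. Therefore $\cx_A(M,I^jN)=\dim_{k[\underline t]}\mathcal G_j$: indeed $\dim_k(\mathcal G_j)_n=\mu(\Ext^n_A(M,I^jN))$, and the Hilbert function of a \fg\ graded $k[t_1,\ldots,t_c]$-module grows like $n^{\dim-1}$, so the infimum defining $\cx_A(M,I^jN)$ is precisely $\dim_{k[\underline t]}\mathcal G_j$ (the standard dimension-theoretic description of complexity; cf.\ \cite{Gulliksen}). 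It thus suffices to show $\dim_{k[\underline t]}\mathcal G_j$ is constant for $j\gg0$.

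For the stabilisation, note that $\R$ is generated in degree one over $A$, so $F(I)$ is standard graded over $k$, and hence, in its Rees grading, $R=F(I)[\underline t]$ is a standard graded Noetherian ring with degree-zero part $R_0=k[\underline t]$, while $\mathcal G=\bigoplus_{j\ge0}\mathcal G_j$ is a \fg\ graded $R$-module. Consequently there is $D$ with $\mathcal G_j=R_1\mathcal G_{j-1}$ for all $j>D$. If $a\in R_0$ kills $\mathcal G_j$ then $a$ kills $R_1\mathcal G_j=\mathcal G_{j+1}$, so $\ann_{R_0}(\mathcal G_j)\subseteq\ann_{R_0}(\mathcal G_{j+1})$ for all $j\ge D$. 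As $R_0=k[t_1,\ldots,t_c]$ is Noetherian this ascending chain stabilises, say for $j\ge j_0$, whence $\dim_{k[\underline t]}\mathcal G_j=\dim\bigl(R_0/\ann_{R_0}(\mathcal G_j)\bigr)$ is constant for $j\ge j_0$. By the previous paragraph $\cx_A(M,I^jN)$ is constant for $j\ge j_0$, which is $(\dagger)$.

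Granting Theorem \ref{main} (which is the technical heart of the paper), the only delicate point in this argument is the middle step: identifying the paper's Betti-number definition of $\cx_A(M,D)$ with the Krull dimension over $k[t_1,\ldots,t_c]$ of the $k$-fibre $\bigoplus_{i}\Ext^i_A(M,D)\otimes_A k$, and verifying that this fibre — sliced along the Rees degree — coincides with $\mathcal G_j$ together with its natural cohomology-operator action. Both facts are standard once one keeps track of the bigrading, after which the stabilisation is just an ascending-chain argument in the Noetherian ring $k[t_1,\ldots,t_c]$.
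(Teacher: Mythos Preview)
Your argument is correct and follows essentially the same path as the paper: apply Theorem \ref{main} to the Rees module $\bigoplus_{j\ge 0}I^jN$, tensor with $k$ to obtain a finitely generated bigraded module over $F(I)[t_1,\ldots,t_c]$, coarsen to a standard $\mathbb{N}$-grading over $R_0=k[t_1,\ldots,t_c]$, and use an ascending chain of $R_0$-annihilators to stabilise the Krull dimension of the slices.

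The one genuine difference is in the bridge between ``Krull dimension of $\mathcal G_j$ over $k[\underline t]$'' and ``$\cx_A(M,I^jN)$''. The paper first passes to an algebraically closed residue field (via a Bourbaki gonflement) and then to the completion, so that it can invoke the Avramov--Buchweitz theorem $\dim\mathcal V(U,V)=\cx_A(U,V)$; the support variety $\mathcal V(U,V)$ is cut out by $\ann_T\bigl(\Ext^*_A(U,V)\otimes_A k\bigr)$, and Lemma \ref{cx} stabilises these annihilators. You instead identify $\cx_A(M,D)$ directly with $\dim_{k[\underline t]}\bigl(\Ext^*_A(M,D)\otimes_A k\bigr)$ via Hilbert-function growth of a finitely generated graded module. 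This is more elementary (it needs only Gulliksen's finiteness and basic dimension theory) and dispenses with both the algebraic-closure step and the citation to \cite{avr-b}; the paper's route, on the other hand, yields the stronger geometric statement that the support varieties $\mathcal V(M,I^jN)$ themselves stabilise, not just their dimensions.
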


\s\label{red-ci} \textbf{Reduction to the case when $A$ is complete and residue field of $A$ is algebraically closed}

\s \label{basic-flat} Suppose $A^\pr$ is a flat local extension of
$A$ such that $\m^\pr = \m A^\pr$ is the maximal ideal of $A^\pr$. If $E$ is an $A$-module then set $E^\pr = E\otimes_A A^\pr$.
Notice $I^\pr \cong IA^\pr$ and we consider it as an ideal in $A^\pr$. By \cite[7.4.3]{lucho98} $A^\pr$ is also a complete intersection.
It can be easily checked that
\[
\cx_{A^\pr}(M^\pr, (I^\pr)^jN^\pr) = \cx_A(M, I^jN) \quad \text{for all} \ n \geq 0.
\]
We now do our reduction in two steps\\

 By \cite[App. Th\'{e}or\'{e}me 1, Corollaire]{BourIX},  there exists a flat local extension
$A \subseteq \widetilde{A}$ such that $\widetilde{\m} = \m \widetilde{A}$ is the maximal ideal of $\widetilde{A}$ and the residue field $\widetilde{k}$ of $\widetilde{A}$ is an
algebraically closed extension of $k$.
By \ref{basic-flat} it follows that we may assume $k$ to be algebraically closed. We now complete $A$. Note that $\widehat{A}$ is a flat
extension of $A$ which satisfies \ref{basic-flat}.

Thus we may assume that our local complete intersection $A$
\begin{enumerate}[\rm(1)]
\item
 is complete. So $A = Q/(f_1,\ldots, f_c)$ where $(Q,\n)$ is regular local and $f_1,\ldots, f_c \in \n^2$ is a regular sequence.
\item
The residue field of $k$ is algebraically closed.
\end{enumerate}
\emph{Of course there exists many $Q$ and $f_1,\ldots, f_c$ of the type as indicated  above. We simply fix one such representation of $A$ }
\s Let $U, V$ be two finitely generated $A$-modules. \\ Let $\E(U,V) = \bigoplus_{n\geq 0} \Ext^{n}_A(U,V)$ be the total ext module of $U$ and $V$. We consider it as a
(finitely generated) module over the ring of cohomological operators $A[t_1,\ldots, t_c]$.
Since $\projdim_Q U$ is finite $\E^*(U,V)$ is a finitely generated
$A[t_1,\ldots, t_c]$-module.

\s Let $\eC(U,V) = \eE(U,V)\otimes_A k$. Clearly $\eC(U,V)$ is a finitely generated
  $T = k[t_1,\ldots, t_c]$-module. (Here degree of $t_i$ is 2 for each $i =1,\ldots, c$).
Set
\[
\fA(U,V) = \ann_T \eC(U,V).
\]
Notice that $\fA(U,V)$ is a homogeneous ideal.

\s We now forget the grading of $T$ and consider the affine space $\mathbb{A}^c(k)$.
Let
$$\cV(U,V) =  \cV\left(\fA(U,V)\right) \subseteq \mathbb{A}^c(k).$$
Since $\fA(U, V)$ is graded ideal we get that $\cV(U,V)$ is a cone.

\s\label{lucho-B}  By a result due to Avramov and Buchweitz \cite[2.4]{avr-b} we get that
\[
\dim \cV(U,V) = \cx_A(M,N)
\]

\begin{lemma}\label{cx}
If $I$ is an ideal in $A$ then there exists $j_0 \geq 0$ such that
\[
\cV(U, I^j V) = \cV(U, I^{j_0} V) \quad \text{for all} \ j \geq j_0.
\]
\end{lemma}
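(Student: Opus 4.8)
The plan is to pass to the residue field $k=A/\m$ and combine Theorem~\ref{main} with a bi-graded prime filtration. Since $A$ is a complete local complete intersection, write $A=Q/(f_1,\ldots,f_c)$ with $(Q,\n)$ regular local and $f_1,\ldots,f_c$ a regular sequence, so that $\projdim_Q U<\infty$. Applying Theorem~\ref{main} with $U$ in place of $M$ and the finitely generated $\R$-module $\bigoplus_{n\geq 0}I^nV$ in place of $N$, the bi-graded module $\bigoplus_{i,n\geq 0}\Ext^i_A(U,I^nV)$ is finitely generated over $\Sc=\R[t_1,\ldots,t_c]$. Tensoring over $A$ with $k$, the module $C:=\bigoplus_{i,n\geq 0}\big(\Ext^i_A(U,I^nV)\otimes_A k\big)$ is finitely generated and bi-graded over $B:=\Sc\otimes_A k=F(I)[t_1,\ldots,t_c]$, where $F(I)=\bigoplus_{n\geq 0}I^n/\m I^n$ is the fiber cone of $I$ and $\deg t_i=(0,2)$. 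Its $n$-th $F(I)$-strand is $C_n=\eC(U,I^nV)=\bigoplus_i\big(\Ext^i_A(U,I^nV)\otimes_A k\big)$. Since $F(I)$ is standard graded (generated over $k$ in $F(I)$-degree $1$ by the $\mu(I)$ classes of a minimal generating set of $I$), the ring $B$ is standard graded in the $F(I)$-grading over $B_0=T=k[t_1,\ldots,t_c]$; hence each $C_n$ is a finitely generated $T$-module and, by definition of $\cV(U,I^nV)$ and $\fA(U,I^nV)$, one has $\cV(U,I^nV)=\cV(\ann_T C_n)$.

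Next I would fix a bi-graded prime filtration of $C$ over the Noetherian bi-graded ring $B$: a chain $0=C^{(0)}\subset C^{(1)}\subset\cdots\subset C^{(m)}=C$ of bi-graded submodules with $C^{(j)}/C^{(j-1)}\cong (B/\fP_j)(a_j,b_j)$ for bi-graded primes $\fP_1,\ldots,\fP_m$ and shifts $a_j,b_j\in\Z$. Passing to $F(I)$-strands, $C_n$ acquires a finite filtration by the $T$-modules $(B/\fP_j)_{\,n+a_j}$ (the $b_j$-shift only moves the $t$-grading and does not affect the $T$-support), whence
\[
\cV(\ann_T C_n)=\bigcup_{j=1}^{m}\cV\big(\ann_T (B/\fP_j)_{\,n+a_j}\big).
\]
So it is enough to prove that for a single bi-graded prime $\fP$ of $B$ the set $\cV\big(\ann_T (B/\fP)_m\big)$ is independent of $m$ for $m$ large. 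The key point is that $B/\fP$ is a domain. If $(B/\fP)_m\neq 0$, then for any $0\neq y\in (B/\fP)_m$ and $t\in T$ the relation $ty=0$ in the domain $B/\fP$ forces $t\in\fP$, so $\ann_T (B/\fP)_m=\fP\cap T$; if $(B/\fP)_m=0$, then $\ann_T (B/\fP)_m=T$. Writing $B_+$ for the $F(I)$-irrelevant ideal $\bigoplus_{n\geq 1}B_n$ (generated by $B_1$, since $B$ is standard graded), one has $(B/\fP)_m\neq 0$ for all $m\geq 0$ when $B_+\not\subseteq\fP$ (take $0\neq x\in(B/\fP)_1$; then $x^m$ is a nonzero element of $(B/\fP)_m$), while $(B/\fP)_m=0$ for all $m\geq 1$ when $B_+\subseteq\fP$. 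Hence $\cV\big(\ann_T (B/\fP)_m\big)$ is $\cV(\fP\cap T)$ for every $m\geq 1$ if $B_+\not\subseteq\fP$, and is empty for every $m\geq 1$ if $B_+\subseteq\fP$; either way it is constant in $m\geq 1$.

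Choosing $n_0$ so large that $n+a_j\geq 1$ for all $j$ whenever $n\geq n_0$, the displayed formula then gives that $\cV(U,I^nV)=\cV(\ann_T C_n)=\bigcup_{j}\cV\big(\ann_T(B/\fP_j)_{\,n+a_j}\big)$ is independent of $n$ for $n\geq n_0$, which is the assertion. (If every $\fP_j$ contains $B_+$ this common value is empty, matching the fact that then $C_n=0$, i.e. $\eC(U,I^nV)=0$, for all $n\gg 0$.)

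The essential input is the finite generation of $\bigoplus_{i,n}\Ext^i_A(U,I^nV)$ over $\Sc$, namely Theorem~\ref{main}; everything after that is the structural observation that, for a finitely generated module over the ring $F(I)[t_1,\ldots,t_c]$ which is standard graded in the $F(I)$-direction, the $T$-supports of the $F(I)$-strands are eventually constant — obtained from a bi-graded prime filtration together with the fact that each $B/\fP$ is a domain, which pins down the $T$-annihilator of every nonzero strand of $B/\fP$ as $\fP\cap T$. The only delicate point is the bookkeeping with the two gradings, so that the prime filtration and the degree shifts are tracked compatibly with the $F(I)$-strand decomposition.
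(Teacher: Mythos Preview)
Your argument is correct. You and the paper share the same setup: by Theorem~\ref{main}, $\eC(N)=\bigoplus_{n}\eC(U,I^nV)$ is a finitely generated module over $B=F(I)[t_1,\ldots,t_c]$, which (after choosing generators for $F(I)$) is a polynomial ring $T[X_1,\ldots,X_m]$ that is standard graded in the $X$-direction over $T=k[t_1,\ldots,t_c]$. From here the paper proceeds in one line: it asserts that for a finitely generated graded module over such a ring the $T$-annihilators $\ann_T \eC(N)_j$ stabilize --- implicitly because $\eC(N)_{j+1}=B_1\cdot\eC(N)_j$ once $j$ exceeds the top generating degree, so $\ann_T\eC(N)_j\subseteq\ann_T\eC(N)_{j+1}$ is an ascending chain in the Noetherian ring $T$. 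You instead take a bi-graded prime filtration of $\eC(N)$ and compute the $T$-support of each $F(I)$-strand of each factor $B/\fP$, using that $B/\fP$ is a domain to pin down the annihilator of every nonzero strand as $\fP\cap T$. Your route is longer but more explicit: it actually identifies the stable variety as the union of $\cV(\fP_j\cap T)$ over those filtration primes $\fP_j$ not containing $B_+$. The paper's ascending-chain route is a two-line argument and yields the marginally stronger statement that the annihilator ideals themselves, not just their zero loci, stabilize.
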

We give a proof of Theorem \ref{comp-stable}
assuming Lemma \ref{cx}.
\begin{proof}[Proof of Theorem \ref{comp-stable}]
By \ref{basic-flat} we may assume that $A$ is also complete and has an algebraically closed residue field.
The result now follows from \ref{lucho-B} and \ref{cx}.
\end{proof}

\s Let $N = \bigoplus_{j \geq 0} I^j V$. Set $M = U$. By hypothesis $A = Q/(f_1,\ldots, f_c)$ where
$(Q,\n)$ is a regular local ring and $f_1,\ldots, f_c \subseteq \n^2$ is a regular sequence.

\s Set $\eC(N) = \eE(N)\otimes_A  k$. By Theorem \ref{main} $\eE(N)$ is a finitely generated $\Sc = \R[t_1,\ldots,t_c]$-module.
It follows that $\eC(N)$ is a finitely generated, bi-graded,  $G = F(I)[t_1,\ldots, t_c]$-module.
Recall that $F(I)$, the fiber-cone of $I$, is a finitely generated $k$-algebra.
So we may as well consider $\eC(N)$ as a bi-graded

$R = k[X_1,\ldots, X_m, t_1,\ldots t_c]$-module (of course here
$X_1,\ldots ,X_m$ are variables). Furthermore $\deg X_l = (1,0)$ for $l = 1,\ldots m$ and
$\deg t_s = (0,2)$ for $s = 1,\ldots, c$. Set $T = k[t_1,\ldots,t_c]$.

\s\label{complexity-const} \textbf{Advantages of coarsening the grading on $\eC(N)$}

 By forgetting the degree on t's we  may consider $R = T[X_1,\ldots,X_m]$. Notice that the correspondingly we obtain
 \[
 \eC(N) = \bigoplus_{n\geq 0}\eC(U, I^jV)
 \]
We now give a
\begin{proof}[Proof of Lemma \ref{cx}]
 We make the constructions as in  \ref{complexity-const}. So $\eC(N)$ is a finitely generated
 $R = T[X_1,\ldots, X_m]$-module. Notice that $R$ is $\mathbb{N}$-standard graded. So there exists $j_0$ such that
$$\ann_T \eC(N)_j =  \ann_T \eC(N)_{j_0} \quad \text{for all } \ j \geq 0. $$
The results follows.
\end{proof}

\begin{question}[with hypothesis as in \ref{comp-stable}]
Is
\[
\cx_A(M, N/I^jN)  \quad \text{  constant for all} \  \ j \gg 0?
 \]
\end{question}

\providecommand{\bysame}{\leavevmode\hbox to3em{\hrulefill}\thinspace}
\providecommand{\MR}{\relax\ifhmode\unskip\space\fi MR }
% \MRhref is called by the amsart/book/proc definition of \MR.
\providecommand{\MRhref}[2]{%
  \href{http://www.ams.org/mathscinet-getitem?mr=#1}{#2}
}
\providecommand{\href}[2]{#2}

\end{document}